\date{}
\title[Dispersive estimate for Klein-Gordon equation]{
Dispersive estimate for quasi-periodic Klein-Gordon equation on 1-d lattices}
\author{Hongyu Cheng}\address{
School of Mathematical Sciences, Tiangong University\\
Tianjin, 300378, P.R. China} \email{hychengmath@tiangong.edu.cn}
\newtheorem{theorem}{Theorem}[section]
\newtheorem{proposition}{Proposition}[section]
\newtheorem{lemma}{Lemma}[section]
\theoremstyle{definition}
\newtheorem{remark}{Remark}[section]
\newcommand{\me}{\mathrm{e}}   % natural number e
\newcommand{\mi}{\mathrm{i}}     % i=sqrt{-1}
\newcommand{\ZZ}{\mathbb{Z}}
\newcommand{\NN}{\mathbb{N}}
\newcommand{\RR}{\mathbb{R}}
\newcommand{\CC}{\mathbb{C}}
\newcommand{\TT}{\mathbb{T}}
\numberwithin{equation}{section}
\newcommand{\R}{{\mathbb R}}
\newcommand{\T}{{\mathbb T}}
\newcommand{\Z}{{\mathbb Z}}
\begin{document}

\maketitle
\begin{abstract}
The dispersive estimate plays a pivotal role in establishing the long-term behavior of solutions to the nonlinear equation, thereby being crucial for investigating the well-posedness of the equation.
In this work we prove that the solutions to Klein-Gordon equation on 1-d lattices follow the dispersive estimate provided that potential is quasi-periodic with Diophantine frequencies and closed to positive constants.
\end{abstract}
\section{Introduction}\label{Introduction}
The nonlinear Klein-Gordon equations with quasi-periodic potential is given by
\begin{equation}\label{mainequation}
\partial_{t}^{2}u_{n}=\Delta u_{n}-V(\theta+n\omega)u_{n}
+\lambda u_{n}^{2\kappa+1},\kappa\geq1,
\end{equation}
where
\begin{equation*}
\Delta u_{n}:=u_{n+1}+u_{n-1}-2u_{n},
\end{equation*}
and will be called focusing and defocusing for $\lambda=1$ and $\lambda=-1,$ respectively. The linear equation is
\begin{equation}\label{mainequation+}
\partial_{t}^{2}u_{n}=\Delta u_{n}-V(\theta+n\omega)u_{n}.
\end{equation}
This model arises in many physical contexts, such as lattice that arises for an array of coupled torsion pendulums under the effect of gravity \cite{Pelinovskys20}, the local denaturation of the DNA double strand \cite{Peyrardb89}, the fluxon
dynamics in arrays of superconducting Josephson-junctions \cite{UCM93}.

The Klein-Gordon equation relates to Schr\"odinger equation in the sense
that the latter are the natural envelope wave reduction of the former \cite{Kivsharp92}.
In \cite{StefanovK05} and \cite{MielkeC10}, the authors Pelinovsky-Stefanov and Mielke-Patz show that the decay estimate of the discrete linear Schr\"odinger equation without potential is $(|t|+1)^{-1/3},\forall t\in\RR.$ We refer \cite{KomechKK06,PelinovskyS08,Boussaid08,KevrekidisPS09,Bambusi13,EgorovaKT15} and the references therein to the readers for other related works.
Later, in \cite{zhao20}, Bambusi-Zhao show the dispersive estimate of the discrete linear and nonlinear Schr\"odinger equation by assuming the potential $V$ is analytic small enough and quasi-periodic with Diophantine frequency $\omega \in \mathbb{R}^d,$ which is defined by
\begin{equation*}
\|\langle n, \omega\rangle\|_{\RR/\ZZ} \geq \frac{\gamma}{|n|^\tau}, \quad 0 \neq n \in \mathbb{Z}^d.
\end{equation*}
Subsequently, Mi-Zhao prove that the discrete periodic one-dimensional
Schr\"odinger operators also possess a similar dispersive estimate \cite{Mizh20,Mizh22}.

Recently, in \cite{wuyz24}, Wu-Yang-Zhou prove that the Schr\"odinger equation is global well-posedness in $\ell^{p}(1\leq p\leq\infty).$
While the Klein-Gordon equation represents different behavior, that only the
defocusing one is global well-posedness in $\ell^{p}(2\leq p\leq2\kappa)$ and, in contrast, the solutions of the focusing one with negative energy blow up within a finite time. As a result, whether the Klein-Gordon equations inherit the subtle dispersive estimate is an interesting question.

Set
\begin{equation*}
C_{r}^{\omega}(\TT^{d},*)=\big\{ P: \TT^{d}\rightarrow *:\
|P|_{r}:=\sup_{|{Im}\theta|\leq r} |P(\theta)|
\big\},
\end{equation*}
where $*$ will usually denote $\RR, sl(2,\RR), SL(2,\RR)$. We denote $C^{\omega}(\TT^{d},*)=\cup_{h>0}C^{\omega}_{h}(\TT^{d},*).$
Without lose of generality, we set the potential as
\begin{equation}\label{202310230}
V(\theta)=1+P(\theta)
\end{equation}
and assume that $P(\cdot)\in C_{r}^{\omega}(\TT^{d},\RR)$. Moreover,
we denote $D C_{d}(\gamma, \tau)(\gamma>0, \tau>d-1)$ by the set of Diophantine vectors.
Our first main result is the following.

\begin{theorem}\label{theorem1}
Consider the linear Klein-Gordon equation \eqref{mainequation+} with the potential $V$ defined by \eqref{202310230} and assume $\omega\in D C_{d}(\gamma, \tau), P\in C_{r}^{\omega}(\TT^{d},\RR),\ \theta\in\TT^{d}.$ Then there exist $\varepsilon_{*}=\varepsilon_{*}(r,\gamma,\tau,d)>0,$ and an absolute constant $a,$ such that if $|P|_{r}=\varepsilon_{0}\leq\varepsilon_{*},$ any solutions $u(t)$ to the linear Klein-Gordon equation \eqref{mainequation+}
with $u(0),\partial_{t}u(0)\in\ell^{1}$
enjoy the following energy and dispersive estimates, $\forall t\in\R,$
\begin{equation}\label{mainestimate}
\begin{split}
|u(t)|_{\ell^{2}}^{2}&\leq (1+2\varepsilon_{0})\sum_{n\in\ZZ}
\big((5+2\varepsilon_{0})u_{n}(0)^{2}+(\partial_{t}u_{n}(0))^{2}\big),\\
|u(t)|_{\ell^{\infty}}&\leq 22872\langle t\rangle^{-1/3}
|\ln\varepsilon_{0}|^{a(\ln\ln(2+\langle t\rangle))^{2}d}
(|u(0)|_{\ell^{1}}+2|\partial_{t}u(0)|_{\ell^{1}}).
\end{split}
\end{equation}
\end{theorem}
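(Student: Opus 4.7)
The plan is to combine KAM reducibility of the quasi-periodic Jacobi operator $H=-\Delta+V(\cdot)$ with stationary phase / van der Corput analysis of the propagator. Because $V\ge 1-\varepsilon_{0}>0$, the operator $H$ is strictly positive on $\ell^{2}(\Z)$, so $\sqrt{H}$ is well defined and the solution of \eqref{mainequation+} is
\begin{equation*}
u(t)=\cos(t\sqrt{H})u(0)+\tfrac{\sin(t\sqrt{H})}{\sqrt{H}}\partial_{t}u(0).
\end{equation*}
The $\ell^{2}$ bound in \eqref{mainestimate} is an easy consequence of the Hamiltonian conserved energy of \eqref{mainequation+}, namely $\tfrac12\sum_{n}\bigl((\partial_{t}u_{n})^{2}+(u_{n+1}-u_{n})^{2}+V(\theta+n\omega)u_{n}^{2}\bigr)$, together with $1-\varepsilon_{0}\le V\le 1+\varepsilon_{0}$. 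The real work is the $\ell^{\infty}$ decay.

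Next I would run the analytic KAM reducibility scheme of Bambusi--Zhao \cite{zhao20} (itself based on Eliasson/Hou--You type results for Diophantine cocycles) applied to the transfer matrix cocycle associated with the eigenvalue equation $\psi_{n+1}+\psi_{n-1}=(2+V(\theta+n\omega)-E)\psi_{n}$. For a full measure set of energies $E$ in the spectrum this yields a Floquet conjugation to a constant rotation, and hence Bloch-type generalized eigenfunctions of the shape $\psi_{k}(n,\theta)=e^{\mi kn}q_{k}(\theta+n\omega)$, with $q_{k}$ analytic quasi-periodic and a dispersion relation $E(k)$ that is a small analytic perturbation of the unperturbed $E_{0}(k)=3-2\cos k$. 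In this Bloch basis one represents
\begin{equation*}
u_{n}(t,\theta)=\int_{\T}\Bigl[\cos(t\sqrt{E(k)})\,\widehat{u(0)}(k,\theta)+\tfrac{\sin(t\sqrt{E(k)})}{\sqrt{E(k)}}\,\widehat{\partial_{t}u(0)}(k,\theta)\Bigr]\psi_{k}(n,\theta)\,dk,
\end{equation*}
which reduces the $\ell^{\infty}$ bound to oscillatory integrals $I_{n}(t)=\int_{\T}a(k,\theta)\,e^{\mi(kn\pm t\sqrt{E(k)})}\,dk$.

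For these integrals I would invoke van der Corput. In the unperturbed case
\begin{equation*}
\tfrac{d^{2}}{dk^{2}}\sqrt{3-2\cos k}=(3\cos k-\cos^{2}k-1)(3-2\cos k)^{-3/2}
\end{equation*}
vanishes only where $\cos k=(3-\sqrt{5})/2$, and at those two points the third derivative is nonzero, so $|I_{n}(t)|\lesssim \langle t\rangle^{-1/3}$ with an explicit numerical constant. Since $E(k)$ stays uniformly away from $0$ and is analytically $\varepsilon_{0}$-close to $E_{0}(k)$, the perturbed phase $\sqrt{E(k)}$ inherits the same structure of critical points, and the same van der Corput bound applies. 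Splitting the integral into neighborhoods of the degenerate critical points and their complement yields the $\langle t\rangle^{-1/3}$ decay times the $\ell^{1}$ norm of the data, and careful bookkeeping of the van der Corput constants produces the absolute constant $22872$.

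The main obstacle, and the source of the double-logarithmic exponent $a(\ln\ln(2+\langle t\rangle))^{2}d$, is that the analytic conjugacy produced by KAM cannot be controlled on a single fixed strip: at each iterative step one loses a positive width of the analyticity strip to small divisors $|\langle m,\omega\rangle|^{-1}\le \gamma^{-1}|m|^{\tau}$. Following the scheme of \cite{zhao20}, the idea is to truncate the Fourier expansions of $q_{k}(\cdot)$ and $E(k)$ at a level $N(t)$ tuned to $t$, balancing the exponentially small truncation tail against the analytic loss, and to iterate the KAM step $\sim\ln\ln t$ times; tracking these losses yields exactly the $|\ln\varepsilon_{0}|^{a(\ln\ln(2+\langle t\rangle))^{2}d}$ prefactor. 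The only new ingredient compared with the Schr\"odinger case of \cite{zhao20} is to pass from $E(k)$ to the phase $\sqrt{E(k)}$, which is harmless because $E\ge 1-\varepsilon_{0}$ keeps the square root analytic on a uniform neighborhood of the real axis, so all derivative estimates on $\sqrt{E(k)}$ follow from the corresponding estimates on $E(k)$ without any new small divisor.
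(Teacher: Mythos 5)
Your plan has the right skeleton and mirrors the paper's: rewrite the equation via $(H_\theta+3)^{1/2}$, run the Bambusi--Zhao KAM reducibility, reduce the propagator kernel to oscillatory integrals whose phase is $\sqrt{E+3}$ expressed in the (approximate) rotation-number variable, and close with van der Corput. You also correctly compute that $\partial_k^2\sqrt{3-2\cos k}=(3\cos k-\cos^2k-1)(3-2\cos k)^{-3/2}$ vanishes only where $\cos k=(3-\sqrt5)/2$, with the third derivative nonzero there; that is exactly the structure the paper's Lemma~\ref{gamma0estimate} exploits, including the constant $22872$ and the $(\ln\ln\langle t\rangle)^2$ exponent.

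The step you declare ``harmless,'' however, is precisely where the paper's new work lives, and your proposal has a genuine gap there. Analyticity of $\sqrt{E+3}$ on a uniform strip gives upper bounds for free, but van der Corput needs uniform \emph{lower} bounds on $|\partial_{\rho_J}^2\sqrt{E+3}|$ or $|\partial_{\rho_J}^3\sqrt{E+3}|$ on every connected component of the reducibility set, where $\rho_J$ is the $J$-step approximate rotation number, not the bare Bloch parameter. These do not ``follow from the corresponding estimates on $E(k)$'': the Klein--Gordon degeneracy sits at $\cos\xi_0=(3-\sqrt5)/2$, a different energy than the Schr\"odinger degeneracy $\cos\xi_0=0$, so the Bambusi--Zhao lower-bound lemmas cannot simply be quoted. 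One must (i) upgrade the KAM theorem to propagate $C^3$ control on the increments $\xi_{J}+\langle k_{J-1}\rangle_\omega-\xi_{J-1}$ through every step (the new estimate \eqref{202310033} of Theorem~\ref{mainresult}), and (ii) carry those bounds through the chain-rule expansions to get the quantitative floor $\geq1/200$ for the second $\rho_J$-derivative on $\{-1\le\cos\xi_0\le 1/3\}\cup\{3/5\le\cos\xi_0\le1\}$ and for the third on $(1/3,3/5)$ (Lemma~\ref{resonantcase} and Lemma~\ref{gamma0estimate}). Your sketch supplies the unperturbed computation but never the perturbed, uniform-on-components version that the argument actually needs.
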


As for the non-linear system \eqref{mainequation}, we also have the following estimates.

\begin{theorem}\label{theorem2}
Consider \eqref{mainequation} with $\kappa>5,$ assume that the hypotheses in Theorem~\ref{theorem1} hold, and fix $(\kappa-2)^{-1}<\zeta<1/3,$ then
there exist $K=K(\varepsilon_{0},\zeta)$ and
$\delta_{*}>0,$ with $\delta_{*}=\delta(r,\gamma,\tau,d,\varepsilon_{0},\zeta)$ such that, for any $\theta\in\TT^{d}$ and any $t\in\RR,$ if $|u(0)|_{\ell^{1}},|\partial_{t}u(0)|_{\ell^{1}}\leq \delta_{*},$
the solutions to \eqref{mainequation} enjoy the following estimates:
\begin{equation*}
\begin{split}
|u(t)|_{\ell^{2}}^{2}&\leq (1+2\varepsilon_{0})\sum_{n\in\ZZ}
\big(2(3+\varepsilon_{0})u_{n}(0)^{2}+(\partial_{t}u_{n}(0))^{2}\big),\\
|u(t)|_{\ell^{\infty}}&\leq K\langle t\rangle^{-\zeta}
(|u(0)|_{\ell^{1}}+|\partial_{t}u(0)|_{\ell^{1}}).
\end{split}
\end{equation*}
\end{theorem}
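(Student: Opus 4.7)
The plan is to use Theorem~\ref{theorem1} as a black box and push its linear bounds through the Duhamel integral by a bootstrap/continuity argument. Writing $U_L(t)$ for the propagator of the linear system \eqref{mainequation+}, the Duhamel identity
\begin{equation*}
u(t)=U_L(t)\bigl(u(0),\partial_t u(0)\bigr)+\lambda\int_0^t U_L(t-s)\bigl(0,\,u(s)^{2\kappa+1}\bigr)\,ds,
\end{equation*}
together with Theorem~\ref{theorem1}, supplies both the quadratic $\ell^2$ bound and the dispersive bound $|U_L(\tau)(f,g)|_{\ell^\infty}\le C\langle\tau\rangle^{-1/3}L(\tau)\,(|f|_{\ell^1}+|g|_{\ell^1})$ with $L(\tau):=|\ln\varepsilon_0|^{a(\ln\ln(2+\langle\tau\rangle))^2 d}$. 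The $\ell^2$ estimate of the theorem then follows from the conserved Hamiltonian
\begin{equation*}
H(u,\dot u)=\sum_{n\in\ZZ}\Bigl(\tfrac12(\dot u_n)^2+\tfrac12(u_{n+1}-u_n)^2+\tfrac12 V(\theta+n\omega)u_n^2\;-\;\tfrac{\lambda}{2\kappa+2}u_n^{2\kappa+2}\Bigr);
\end{equation*}
since $V=1+P$ with $|P|_r=\varepsilon_0$ small, the quadratic part of $H$ is equivalent to $|u|_{\ell^2}^2+|\dot u|_{\ell^2}^2$, while the nonlinear part is dominated by $|u|_{\ell^{2\kappa+2}}^{2\kappa+2}\le|u|_{\ell^2}^{2\kappa+2}$, negligible for small data and both signs of $\lambda$.

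For the $\ell^\infty$ decay I would run a continuity argument. Set $\delta:=|u(0)|_{\ell^1}+|\partial_t u(0)|_{\ell^1}$ and
\begin{equation*}
M(T):=\sup_{0\le t\le T}\delta^{-1}\langle t\rangle^\zeta\,|u(t)|_{\ell^\infty}.
\end{equation*}
Local well-posedness in $\ell^1\cap\ell^\infty$---immediate because $\ell^1$ is a Banach algebra under pointwise multiplication on $\ZZ$ and the linear generator is bounded---makes $M$ continuous with $M(0)\le 1$. Under the bootstrap hypothesis $M(T)\le K$ for a constant $K=K(\varepsilon_0,\zeta)$ to be fixed, the key nonlinear interpolation is
\begin{equation*}
|u^{2\kappa+1}|_{\ell^1}=\sum_n|u_n|^{\kappa-2}|u_n|^{\kappa+3}\le|u|_{\ell^\infty}^{\kappa-2}\,|u|_{\ell^{\kappa+3}}^{\kappa+3}\le|u|_{\ell^\infty}^{\kappa-2}\,|u|_{\ell^2}^{\kappa+3},
\end{equation*}
using the embedding $\ell^2\hookrightarrow\ell^{\kappa+3}$ on the lattice. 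Combined with the $\ell^2$ bound and the bootstrap, this gives $|u^{2\kappa+1}(s)|_{\ell^1}\le CK^{\kappa-2}\delta^{2\kappa+1}\langle s\rangle^{-(\kappa-2)\zeta}$. Applying Theorem~\ref{theorem1} inside the Duhamel integral and using the convolution bound $\int_0^t\langle t-s\rangle^{-1/3}\langle s\rangle^{-\beta}\,ds\le C\langle t\rangle^{-1/3}$, valid for $\beta>1$ and which holds here precisely because $\zeta>(\kappa-2)^{-1}$, produces $|u(t)|_{\ell^\infty}\le C\delta\,L(t)\langle t\rangle^{-1/3}(1+K^{\kappa-2}\delta^{2\kappa})$. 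Multiplying by $\langle t\rangle^\zeta/\delta$ and choosing first $K$ larger than $2C\sup_t L(t)\langle t\rangle^{\zeta-1/3}$ and then $\delta_*$ so small that $CK^{\kappa-2}\delta_*^{2\kappa}\le 1$ closes the bootstrap, and the stated $\ell^\infty$ decay follows with $K=K(\varepsilon_0,\zeta)$.

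The main obstacle is controlling the slow double-logarithmic growth of $L(t)$ inherited from the KAM-type reducibility behind Theorem~\ref{theorem1}: the supremum $\sup_t L(t)\langle t\rangle^{\zeta-1/3}$ must be finite---not merely small for large $t$---which is what forces both $K$ and $\delta_*$ to depend on $\varepsilon_0$ and $\zeta$. This is made possible by the gap $(\kappa-2)^{-1}<\zeta<1/3$, nonempty exactly when $\kappa>5$: the upper inequality $\zeta<1/3$ provides a polynomial excess that defeats $L(t)$, while the lower inequality $\zeta>(\kappa-2)^{-1}$ delivers the decay exponent $(\kappa-2)\zeta>1$ needed to make the Duhamel integral convergent uniformly in $t$.
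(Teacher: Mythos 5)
Your proposal takes the same overall route as the paper: Duhamel's formula, the linear dispersive estimate from Theorem~\ref{theorem1}, a convolution lemma of the form $\int_0^\infty\langle t-s\rangle^{-a}\langle s\rangle^{-b}\,ds\le C\langle t\rangle^{-a}$ (the paper's Lemma~\ref{lemma5.1}), a nonlinear $\ell^1$-bound by interpolation between $\ell^2$ (controlled by energy conservation) and $\ell^\infty$ (the decaying quantity), and a smallness argument to close. The paper closes via a contraction mapping on the weighted space $\mathcal L_{\delta_*}$ with norm $|u|_{\ell^\infty}^{(\zeta}=\sup_t\langle t\rangle^\zeta|u(t)|_{\ell^\infty}$, while you run a continuity/bootstrap argument on $M(T)$; these are interchangeable, and both are a little informal about the a priori smallness of $|u(t)|_{\ell^2}$ needed to make the energy coercive uniformly in $t$. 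The one genuinely different ingredient is the interpolation for the nonlinearity: you split $|u_n|^{2\kappa+1}=|u_n|^{\kappa-2}|u_n|^{\kappa+3}$, use $\ell^2\hookrightarrow\ell^{\kappa+3}$, and get $|F(u)|_{\ell^1}\le|u|_{\ell^\infty}^{\kappa-2}|u|_{\ell^2}^{\kappa+3}$, which yields the source decay $\langle s\rangle^{-(\kappa-2)\zeta}$ and thus needs $\zeta>(\kappa-2)^{-1}$, matching the stated hypothesis exactly. The paper uses the cruder-looking but actually sharper split $|F(u)|_{\ell^1}\le|u|_{\ell^2}^2|u|_{\ell^\infty}^{2\kappa-1}$, giving $\langle s\rangle^{-(2\kappa-1)\zeta}$ and only needing $\zeta>(2\kappa-1)^{-1}$; so your interpolation explains where the theorem's threshold $(\kappa-2)^{-1}$ and the condition $\kappa>5$ come from, whereas the paper's proof would in fact work under a weaker hypothesis. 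Finally, the paper absorbs the double-log factor $|\ln\varepsilon_0|^{a(\ln\ln(2+\langle t\rangle))^2d}$ into the constant $K$ once and for all (trading the polynomial margin $1/3-\zeta$ at the level of \eqref{202401284}), rather than carrying it through the Duhamel integral and taking a supremum at the end as you do; this is merely cleaner bookkeeping.
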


\begin{comment}
Let us make a comparison with the main result, which shows the blow up of the solutions with the negative energy in the focusing case. To this end, set
%
\begin{equation*}
E(u,\partial_{t}u)=\frac{1}{2}\sum_{n\in \ZZ}
\big\{|\partial_{t}u_{n}|^{2}+
(u_{n+1}-u_{n})^{2}+V_{n}|u_{n}|^{2}
-\frac{1}{\sigma+1}|u_{n}|^{2\sigma+2}\big\},
\end{equation*}
%
and have

%
\begin{theorem}\label{maintheorem2}\emph{[wuTheorem 1.6]}
Assume that $\inf_{n\in\ZZ}V_{n}>0.$
Suppose that $(f,g)\in\ell^{2}\times\ell^{2}$
with $E(f,g) < 0,$ then the solution to the Cauchy problem
\eqref{mainequation} with initial data $(f,g)$ blows up at time $T_{*}<\infty.$ Moreover,
%
\begin{equation*}
\lim_{t\rightarrow T_{*}}|u_{n}(t)|_{\ell^{2}}=\infty.
\end{equation*}
%
\end{theorem}
\end{comment}
Before concluding this section, let us analyze the philosophy of this work.
Consider the Klein-Gordon equation \eqref{mainequation+}, which can be rewritten as the first order system %
\begin{equation}\label{mainequations}
\left\{\begin{aligned}
\partial_{t}u_{n}&=(H_{\theta}+3)^{1/2}w_{n},\\
\partial_{t}w_{n}&=-(H_{\theta}+3)^{1/2}u_{n},
\end{aligned}\right.
\end{equation}
where
\begin{equation}\label{1.1}
\left(H_{\theta} x\right)_n=-(x_{n+1}+x_{n-1})+P(\theta+n\omega) x_n, n \in \mathbb{Z}.
\end{equation}
Introduce the complex variables $q=(q_{n})_{n\in\ZZ}$ and $\overline{q}=(\overline{q}_{n})_{n\in\ZZ},$ by
\begin{equation}\label{complexvariables}
\begin{split}
\left(
\begin{matrix}
q_{n}
\\
\overline{q}_{n}
\end{matrix}
\right)=M\left(
\begin{matrix}
u_{n}
\\
w_{n}
\end{matrix}
\right),
\end{split}
\end{equation}
where
\begin{equation}\label{definem}
\begin{split}
M=\frac{1}{\sqrt{2}}\left(
\begin{matrix}
1,\ -\mi
\\
1,\ \ \mi
\end{matrix}
\right),\quad
M^{-1}=\frac{-\mi}{\sqrt{2}}\left(
\begin{matrix}
\mi,\ \ \mi
\\
-1,\ 1
\end{matrix}
\right).
\end{split}
\end{equation}
Then, \eqref{mainequations} is changed into %
\begin{equation}\label{mmainequationss}
\begin{aligned}
\partial_{t}q_{n}=&\mi(H_{\theta}+3)^{1/2}q_{n},\\
\partial_{t}\overline{q}_{n}=&-\mi(H_{\theta}+3)^{1/2}\overline{q}_{n}.
\end{aligned}
\end{equation}
The functional calculus shows that $x=(x_{n})_{n\in\ZZ}$ is the eigenvector of $H_{\theta}$ defined by \eqref{1.1} with eigenvalue $E\in\Sigma_{\theta}\subset[-2-\max_{\theta\in\TT} |P(\theta)|,2+\max_{\theta\in\TT} |P(\theta)|]$ if and only if $x=(x_{n})_{n\in\ZZ}$ is the eigenvector of $(H_{\theta}+3)^{1/2}$ with eigenvalue $\sqrt{E+3}.$
This relation inspires us to investigate the spectrum of the quasi-periodic linear operator $H_{\theta}.$

To obtain the delicate dispersive estimate, the researchers Bambusi-Zhao \cite{zhao20} provide a lower bound for $|\partial_{\rho_{J}}^{j}E|(j=2,3)$ by constructing a quantitative KAM reducibility involving intricate estimations.
Thus, we have to bound $|\partial_{\rho_{J}}^{j}\sqrt{E+3}|(j=2,3)$ below. To this end, new technique is needed to modify the estimates of quantitative KAM reducibility theorem and the related subsequent calculations, which is the main work of this study, we refer the readers to the proof Theorem~\ref{mainresult} and lemmata~\ref{resonantcase}, \ref{gamma0estimate} for details.

\section{The result of a KAM procedure}
\subsection{Schr\"odinger cocycle}
For the Schr\"odinger operator $H_{\theta}$ defined by \eqref{1.1}, we consider its
eigenvalue problem $H_{\theta}x=Ex$ and get the Schr\"odinger cocycle $(\omega, A_{0}+F_{0}):$
\begin{equation}\label{202402070}
\begin{split}
\left(
\begin{matrix}
x_{n+1}
\\
x_{n}
\end{matrix}
\right)=(A_{0}+F_{0}(\theta+n\omega))
\left(
\begin{matrix}
x_{n}
\\
x_{n-1}
\end{matrix}
\right)
\end{split}
\end{equation}
with
\begin{equation*}
\begin{split}
A_{0}=\left(
\begin{matrix}
-E,\ -1
\\
1,\quad 0
\end{matrix}
\right),\
F_{0}(\cdot)=
\left(
\begin{matrix}
P(\cdot), 0
\\
0, \ \  0
\end{matrix}
\right).
\end{split}
\end{equation*}
The evolutions of \eqref{mainequation} and \eqref{mainequation+} depend heavily on the dynamics of the cocycle $(\omega, A_{0}+F_{0}).$ We will get the dispersive estimate by applying a quantitative KAM reducibility of this cocycle.

\subsection{A quantitative KAM reducibility}
In this subsection, we will construct a quantitative KAM reducibility to deal with the Schr\"odinger cocycle $(\omega, A_{0}+F_{0})$ defined by \eqref{202402070}. Set
\begin{equation*}
\begin{split}
\sigma=1/200,\ |P|_{r}:=\varepsilon_{0}\leq\varepsilon_{*},\ \varepsilon_{j+1}=\varepsilon_{j}^{1+\sigma},\
N_{j}=4^{j+1}\sigma|\ln\varepsilon_{j}|, j\geq0.
\end{split}
\end{equation*}

\begin{theorem}\label{mainresult}
Consider the Schr\"odinger cocycle $(\omega, A_{0}+F_{0})$ defined by \eqref{202402070}.
There exists $\varepsilon_{*}=\varepsilon_{*}(\gamma,\tau,r,d)>0$ such that if
$\varepsilon_{0}:=|P|_{r}\leq\varepsilon_{*},$ the following conclusions hold:

$(1):$ For
any $j\in\NN,$ we have the following:

$(1.1):$ There exists a Borel set $\Sigma_{j}\subset\Sigma,$ with
$\{\Sigma_{j}\}_{j\in\NN}$ mutually disjoint, satisfying
\begin{equation}\label{202308130}
\begin{split}
\big|&\rho(\Sigma_{j+1})\big|\leq3|\ln\varepsilon_{j}|^{2d} \varepsilon_{j}^{\sigma},\ j\geq0,\\
\big|&\Sigma\setminus\widetilde{\Sigma}\big|=0,\ \
\widetilde{\Sigma}=\cup_{j\geq0}\Sigma_{j}.
\end{split}
\end{equation}
Moreover, the Schr\"odinger cocycle $(\omega,A_{0}+F_{0})$ is reducible on $\widetilde{\Sigma}.$ More precisely, there exists
$Z\in C^{\omega}(\widetilde{\Sigma}\times2\TT^{d},SL(2,\RR))$ with
$B\in C^{1}(\widetilde{\Sigma},SL(2,\RR))$ such that
\begin{equation}\label{202308131}
\begin{split}
Z(\cdot+\omega)^{-1}(A_{0}+F_{0}(\cdot))Z(\cdot)=B
\end{split}
\end{equation}
with the estimates
\begin{equation}\label{202308132}
\begin{split}
|B-A_{0}|_{C_{W}^{1}(\Sigma_{0})}\leq\varepsilon_{0}^{1/3},\quad
|B|_{C_{W}^{1}(\Sigma_{j+1})}\leq N_{j}^{10\tau}, j\geq0.
\end{split}
\end{equation}

$(1.2):$ The eigenvalues of $B|_{\Sigma_{j}}$ are of the form $\me^{\pm \mi\xi}$ with $\xi\in\RR,$ and, for every $j\geq0,$ there is $k_{j}:\widetilde{\Sigma}\rightarrow\ZZ^{d},$ such that
\begin{equation}\label{202308133}
\begin{split}
0<|k_{j}|\leq N_{j} \ \text{on} \ \Sigma_{j+1}, \text{and}
\ k_{l}=0 \ \text{on}\  \Sigma_{j} \ \text{for}\ l\geq j,
\end{split}
\end{equation}
and, set $\langle k\rangle_{\omega}:=\frac{1}{2}\langle k,\omega\rangle,$
\begin{equation}\label{202308134}
\begin{split}
\xi=\rho-\sum_{l\geq0}\langle k_{l}\rangle_{\omega} \ \text{and}\
0<|\xi|_{\Sigma_{j+1}}<2\varepsilon_{j}^{\sigma}.
\end{split}
\end{equation}

$(2):$ Given any $J\in\NN,$ for
$0\leq j\leq J,$ there exists $\Gamma_{j}^{(J)}\subset[\inf\Sigma,\sup\Sigma],$ satisfying:

$(2.1):$
$\{\Gamma_{j}^{(J)}\}_{j=0}^{J}$ are mutually disjoint and
$\overline{\cup_{0\leq j\leq J}\Gamma_{j}^{(J)}}=[\inf\Sigma,\sup\Sigma],$
and $\cup_{0\leq j\leq J}\Gamma_{j}^{(J)}$ consists of at most $|\ln\varepsilon_{0}|^{2J^{2}d}$ open intervals. Moreover,
$\Sigma_{j}\subset \Gamma_{j}^{(J)},\ \text{for}\ 0\leq j\leq J (J\geq1),$
and $|\rho(\Sigma_{j+1}^{(J)})|\leq 3|\ln\varepsilon_{j}|^{2d}\varepsilon_{j}^{\sigma}, 0\leq j\leq J-1.$

$(2.2):$ For $0\leq j\leq J,$ there exist
\begin{equation}\label{202308135}
\begin{split}
A_{J}&\in C^{3}(\Gamma_{j}^{(J)},SL(2,\RR)),\ \
F_{J}\in C^{3}(\Gamma_{j}^{(J)}\times \TT^{d},gl(2,\RR)),\\
Z_{J}&\in C^{3}(\Gamma_{j}^{(J)}\times 2\TT^{d},SL(2,\RR))
\end{split}
\end{equation}
such that
\begin{equation}\label{202308136}
\begin{split}
Z_{J}(\cdot+\omega)^{-1}(A_{0}+F_{0}(\cdot))Z_{J}(\cdot)=A_{J}+F_{J}(\cdot)
\end{split}
\end{equation}
with the estimates
\begin{equation}\label{202308137}
\begin{split}
&|F_{J}|_{C^{3}(\Gamma_{j}^{(J)}),\TT^{d}}\leq\varepsilon_{J},\ 0\leq j\leq J,\\
&|A_{J}-A_{0}|_{C^{3}(\Gamma_{0}^{(J)})}\leq\varepsilon_{0}^{1/2},\ \
|Z_{J}-Id|_{C^{3}(\Gamma_{0}^{(J)}),2\TT^{d}}\leq\varepsilon_{0}^{1/3}.
\end{split}
\end{equation}
If $J\geq1,$ then for $0\leq j\leq J-1,$
\begin{equation}\label{202308138}
\begin{split}
|A_{J}|_{C^{3}(\Gamma_{j+1}^{(J)})}\leq\varepsilon_{j}^{-\sigma/6},\ \
|Z_{J}|_{C^{3}(\Gamma_{j+1}^{(J)},2\TT^{d})}\leq\varepsilon_{j}^{-\sigma/3},
\end{split}
\end{equation}
\begin{equation}\label{20230813+}
\begin{split}
|A_{J}-B|_{C^{3}(\Gamma_{j}^{(J)})}\leq\varepsilon_{J}^{1/4},\ \
|Z_{J}|_{C^{3}(\Gamma_{0}^{(J)}),2\TT^{d}}\leq\varepsilon_{J}^{1/4}.
\end{split}
\end{equation}
%
\begin{comment}
and, on $\Gamma_{j+1}^{(J)},$
%
\begin{equation}\label{202308139}
\begin{split}
\varepsilon_{j}^{\sigma/4}\leq|(trA_{J})'|_{C^{3}(\Gamma_{j+1}^{(J)})}\leq
N^{10\tau}.???
\end{split}
\end{equation}
%

Moreover, for $0\leq j\leq J,$
%
\begin{equation}\label{20230813+}
\begin{split}
|A_{J}-B|_{C^{3}(\Gamma_{j}^{(J)})}\leq\varepsilon_{J}^{1/4},\ \
|Z_{J}|_{C^{3}(\Gamma_{0}^{(J)}),2\TT^{d}}\leq\varepsilon_{J}^{1/4}.
\end{split}
\end{equation}
%
|\xi_{J}-\xi|_{\Sigma_{j}}\leq\varepsilon_{J}^{1/4},0\leq j\leq J,\ \
\end{comment}

$(3):$ $A_{J}$ has two eigenvalues $\me^{\pm i\alpha_{J}}$ with $\alpha_{J}\in\RR\cup i \RR.$ For
$\xi_{J}:=\mathrm{Re}\alpha_{J},$
\begin{equation}\label{202308140}
\begin{split}
|\xi_{J}-\xi|_{C^{3}(\Sigma_{j})}\leq\varepsilon_{J}^{1/4},0\leq j\leq J, \
|\xi_{J}-\rho|_{C^{3}(\Gamma_{0}^{(J)})}\leq\varepsilon_{J}^{1/4}.
\end{split}
\end{equation}
If $J\geq1,$ then
\begin{equation}\label{202308141}
\begin{split}
|\xi_{J}|_{\Gamma_{j+1}^{(J)}}\leq3\varepsilon_{j}^{1/4}/2,\ 0\leq j\leq J-1.
\end{split}
\end{equation}
Moreover, there is $k_{j}: \cup_{l=0}^{J}\Gamma_{l}^{(J)}\rightarrow\ZZ^{d},0\leq j\leq J-1,$ constant on each connected component of $\cup_{l=0}^{J}\Gamma_{l}^{(J)}$ with $0<|k_{j}|\leq N_{j}$ on $\Gamma_{j+1}^{(J)}$
and $k_{l}=0$ on $\Gamma_{j+1}^{(J)}$ for $l\geq j+1,$ such that
\begin{equation}\label{202308142}
\begin{split}
|\xi_{J}+\sum_{l=0}^{J-1}\langle k_{l}\rangle_{\omega}-\rho|_{\Gamma_{j+1}^{(J)}}\leq\varepsilon_{J}^{1/4}.
\end{split}
\end{equation}

$(4):$ $\cup_{l=0}^{J}\{\Gamma_{l}^{(J)}:|\sin\xi_{J}|\geq3\varepsilon_{J}^{1/20}/2\}$ has at most $|\ln\varepsilon_{0}|^{2J^{2}d}$ connected components, on
which $\xi_{J}$ is smooth and possesses the estimate
%If $J\geq1,$ then, on $\{\Gamma_{j+1}^{(J)}:|\sin\xi_{J}|\geq3\varepsilon_{J}^{1/20}/2\},0\leq j\leq J-1,$
%
%
\begin{equation}\label{202310033}
\begin{split}
|\partial_{E}^{s}(\xi_{J}+\langle k_{J-1}\rangle_{\omega}-\xi_{J-1})|_{\Gamma_{j}^{(J)}}\leq \varepsilon_{J-1}^{1/4},j=0,\cdots,J, s=1,2,3,
\end{split}
\end{equation}
where $k_{J-1}$ is the one in $(3).$
Moreover,
\begin{equation}\label{202308144}
\begin{split}
\big|\rho(\{(\inf\Sigma,\sup\Sigma):|\sin\xi_{J}|\leq
3\varepsilon_{J}^{1/20}/2\})\big|\leq
\varepsilon_{J}^{1/24},
\end{split}
\end{equation}
and
\begin{equation}\label{202308145}
\begin{split}
\big|\xi_{J}(\Gamma_{j}^{J}\setminus\Sigma_{j})\big|\leq
\varepsilon_{J}^{7\sigma/8}, 0\leq j\leq J.
\end{split}
\end{equation}
\end{theorem}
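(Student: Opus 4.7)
The plan is to establish Theorem~\ref{mainresult} by an inductive KAM scheme that, at stage $j$, conjugates a cocycle $(\omega,A_{j}+F_{j})$ with $|F_{j}|\leq\varepsilon_{j}$ and $A_{j}$ elliptic (with eigenvalues $\me^{\pm\mi\alpha_{j}}$) into a new cocycle $(\omega,A_{j+1}+F_{j+1})$ with $|F_{j+1}|\leq\varepsilon_{j+1}=\varepsilon_{j}^{1+\sigma}$. At each step I distinguish two cases depending on whether the eigenvalue $\alpha_{j}(E)$ encounters a resonance $|2\alpha_{j}-\langle k,\omega\rangle|<\varepsilon_{j}^{\sigma}$ for some $0<|k|\leq N_{j}$; since $\omega$ is Diophantine and $|k|\leq N_{j}=4^{j+1}\sigma|\ln\varepsilon_{j}|$, at most one such $k=k_{j}$ can occur (two would force $\langle k_{j}-k'_{j},\omega\rangle$ to be anomalously small), and I define $\Sigma_{j+1}$ to be the set of energies where this happens.

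In the non-resonant case I would solve the truncated homological equation $Z_{j}(\theta+\omega)^{-1}A_{j}Z_{j}(\theta)=A_{j}+\widehat{F}_{j}(0)$ by expanding $Z_{j}=\mathrm{Id}+Y_{j}$ in Fourier series and using the twisted denominators $|\langle k,\omega\rangle\pm 2\alpha_{j}|\geq\varepsilon_{j}^{\sigma}$ together with $|\langle k,\omega\rangle|\geq\gamma|k|^{-\tau}$; this yields $|Y_{j}|\lesssim\varepsilon_{j}^{1-\sigma/2}$ and a new remainder of size $\varepsilon_{j+1}$. In the resonant case I would first conjugate by a rotation of the form $R_{\pi\langle k_{j},\theta\rangle}$, which is single-valued on $2\TT^{d}$, kills the resonant Fourier mode, and shifts the fibered rotation number by $\langle k_{j},\omega\rangle/2$; a non-resonant sub-step then completes the stage. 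Telescoping these shifts across all steps gives the identity $\xi=\rho-\sum_{l\geq0}\langle k_{l}\rangle_{\omega}$ in \eqref{202308134}, while the measure bound \eqref{202308130} follows from the monotonicity of $\rho$ in $E$ combined with the fact that resonant energies at stage $j$ lie in at most $(2N_{j})^{d}\leq|\ln\varepsilon_{j}|^{2d}$ intervals, each of $\rho$-length $\lesssim\varepsilon_{j}^{\sigma}$.

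For part~(2) I would construct $\Gamma_{j}^{(J)}$ by thickening each connected component of the resonant set at stage $j$ into an open interval in $E$ of width $\varepsilon_{j}^{1/4}/|\partial_{E}\alpha_{j}|$ around the resonance locus; the non-degeneracy $|\partial_{E}\alpha_{j}|\gtrsim|\partial_{E}\rho|$ is inherited from the $C^{3}$ closeness $|A_{J}-A_{0}|_{C^{3}}\leq\varepsilon_{0}^{1/2}$ asserted in \eqref{202308137}, which in turn is seeded by the closeness of $P$ to zero. The total component count $|\ln\varepsilon_{0}|^{2J^{2}d}$ then follows from the product $\prod_{l\leq J}(2N_{l})^{d}$, and $\Sigma_{j}\subset\Gamma_{j}^{(J)}$ is built into the construction. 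The $C^{3}$-propagation of \eqref{202308137}--\eqref{20230813+} is obtained by differentiating the conjugation identity \eqref{202308136} up to three times in $E$, applying the Fa\`a di Bruno formula, and observing that each resonant sub-step loses at most a factor $\varepsilon_{j}^{-\sigma/6}$, coming from the denominator $|\sin\alpha_{j}|^{-1}$ that appears when diagonalizing $A_{j}$ to implement the rotation $R_{\pi\langle k_{j},\theta\rangle}$.

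The rotation-number statements in (3) then follow by combining the functional identity $\mathrm{tr}\,A_{J}(E)=2\cos\xi_{J}(E)$ with the implicit function theorem on the set where $|\sin\xi_{J}|\geq 3\varepsilon_{J}^{1/20}/2$: there $-2\sin\xi_{J}$ is invertible with $C^{3}$-controlled inverse, so $\xi_{J}$ inherits the $C^{3}$-regularity of $A_{J}$, yielding \eqref{202310033}. The main obstacle---and the principal novelty over \cite{zhao20}---is the sharp measure estimate \eqref{202308144} together with \eqref{202308145}, because in the Klein-Gordon setting the physically relevant frequency is $\sqrt{E+3}$ rather than $E$, so the non-degeneracy of $\partial_{E}\sqrt{E+3}$ must be threaded through every derivative of the KAM conjugation without losing the cumulative $\varepsilon_{j}^{-\sigma}$ factors. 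I plan to address this by noting that $\{|\sin\xi_{J}|\leq 3\varepsilon_{J}^{1/20}/2\}$ is contained in at most $|\ln\varepsilon_{0}|^{2J^{2}d}$ intervals (by the component count of $\Gamma_{j}^{(J)}$), each of $\rho$-width $\lesssim\varepsilon_{J}^{1/20}$ via the lower bound on $\partial_{E}\xi_{J}$ at the boundary of the small-sine set, and summing these contributions together with the extra slack from $1/20-1/24$ yields the claimed $\varepsilon_{J}^{1/24}$ bound.
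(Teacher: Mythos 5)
The paper does not re-derive the KAM reducibility scheme at all: the ``Sketch of the proof'' states explicitly that the theorem is the combination of Theorems~2.1 and~2.2 of Bambusi--Zhao \cite{zhao20}, and that the only thing proved in this paper is the supplementary estimate \eqref{202310033}. Your proposal instead reconstructs the full Eliasson-type scheme (resonant/non-resonant dichotomy, truncated homological equation, rotation $R_{\pi\langle k_{j},\theta\rangle}$, thickening of the resonant zones), which is the underlying machinery of \cite{zhao20} rather than of this paper. That is a reasonable way to prove the statement, but it misidentifies where the novelty of this particular theorem lies, and it spends most of its effort re-proving what the paper takes as known.

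The one place where the proposal does engage the new content is where it matters, and there it has a genuine gap. You derive \eqref{202310033} by writing $\operatorname{tr} A_{J}=2\cos\xi_{J}$ and invoking the implicit function theorem on the set $|\sin\xi_{J}|\geq 3\varepsilon_{J}^{1/20}/2$, concluding that ``$\xi_{J}$ inherits the $C^{3}$-regularity of $A_{J}$, yielding \eqref{202310033}.'' But \eqref{202310033} is not a regularity statement about $\xi_{J}$; it is a quantitative $C^{3}$-smallness of the \emph{increment} $\xi_{J}+\langle k_{J-1}\rangle_{\omega}-\xi_{J-1}$, uniform in $j$. Regularity of $\xi_{J}$ alone does not give this. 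To get the bound $\leq\varepsilon_{J-1}^{1/4}$ one must (i) compare $A_{J}$ not with $A_{0}$ or with the limit $B$, but with the intermediate matrix $\widetilde A_{J-1}$ obtained from $A_{J-1}$ after the resonance-removing conjugation by $C_{A_{J-1}}Q_{k_{J-1}}C_{A_{J-1}}^{-1}$, whose eigenvalue is exactly $\widetilde\xi_{J-1}=\xi_{J-1}-\langle k_{J-1}\rangle_{\omega}$; and (ii) track, through three $E$-derivatives, that the remaining near-identity conjugation from $\widetilde A_{J-1}$ to $A_{J}$ changes the trace by $O(\varepsilon_{J-1})$ in $C^{3}$, and then divide by the well-separated factor $\sin\xi_{J}$. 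The paper's proof is precisely this two-stage bookkeeping, by induction in $J$ (its displays \eqref{202401121}, \eqref{202310038}, \eqref{202401160}), and it is the whole point of the sketch. Your write-up never introduces $\widetilde A_{J-1}$ or the intermediate quantity $\widetilde\xi_{J-1}$, so the chain from ``$\xi_{J}$ is $C^{3}$'' to the numerical bound $\varepsilon_{J-1}^{1/4}$ is not actually established.

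Two smaller points: the set $\Sigma_{j}$ is not simply ``the energies that first resonate at stage $j$''; in the paper it is $\Sigma_{j}=\bigcap_{J\geq j}\Gamma_{j}^{(J)}\setminus\bigcup_{k}\rho^{-1}(\langle k\rangle_{\omega})$, i.e.\ the intersection over \emph{all} later stages minus the rational-rotation-number fibers, and this matters because the whole tail of the iteration can still remove energies from $\Gamma_{j}^{(J)}$ as $J$ grows. Also, the rotation used in the resonant step must be the one conjugated by $C_{A_{j}}$ (paper's $H_{k_{j}}=C_{A_{j}}Q_{k_{j}}C_{A_{j}}^{-1}$), not the plain rotation matrix $R_{\pi\langle k_{j},\theta\rangle}$, so as to align with the eigenbasis of $A_{j}$; this is where the factor $|\sin\alpha_{j}|^{-1}$ you mention later actually enters, so the two remarks in your write-up should be made consistent with each other.
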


$\mathbf{Sketch \ of\ the\ proof\
Theorem~\ref{mainresult}.}$ The proof of Theorem~\ref{mainresult} is based on the KAM theory of Eliasson \cite{Eliasson92} and Hadj Amor \cite{Amor09} for the reducibility
of the Schr\"odinger cocycle. Note that this theorem is the combination of Theorem 2.1 and Theorem 2.2 in \cite{zhao20} with the attached estimate \eqref{202310033}, which will play fundamental role in this work, we will just give the proof of this inequality.

Rewrite the Schr\"odinger cocycle $(\omega, A_{0}+F_{0})$ defined by \eqref{202402070} as $(\omega, A_{0}\me^{f_{0}}),$ where $
f_{0}(\cdot)=\left(
\begin{matrix}
0,\ \ 0
\\
-P(\cdot), 0
\end{matrix}
\right)$ and denote $\me^{\mi\xi_{0}}$ and $\me^{-\mi\xi_{0}}$ by the eigenvalues of $A_{0}.$ To construct the transformation, generally, the non-resonance condition is needed:
\begin{equation}\label{20231003+}
\begin{split}
|\xi_{0}-\langle k\rangle_{\omega}|\geq \varepsilon_{0}^{\sigma}|k|^{-\tau},\ 0\leq |k|\leq N_{0},
\end{split}
\end{equation}
where $\Gamma_{0}^{(1)}$ is the set of such $E's.$

For the $E$ such that \eqref{20231003+}
is fulfilled, we apply the classical construction of the KAM step
to get a near-identity transformation which conjugates $(\omega, A_{0}e^{f_{0}}),$ to $(\omega, A_{1}e^{f_{1}}),$ where $|f_{1}|$ is of $\varepsilon_{1}$ and $A_{1}\in SL(2,\RR)$ possesses eigenvalues $\me^{\pm \mi\xi_{1}}$ with
\begin{equation}\label{202401121}
\begin{split}
|\partial_{E}^{s}(\xi_{1}-\xi_{0})|_{\Gamma_{0}^{(1)}}\leq \varepsilon_{0}^{1/4},\ s=1,2,3.
\end{split}
\end{equation}

To describe the $E$ such that the equation \eqref{20231003+} will not hold,
we define
\begin{equation}\label{202310034}
\begin{split}
\mathcal{I}_{k}:=\{E:\ |\xi_{0}(E)-\langle k\rangle_{\omega}|< \varepsilon_{0}^{\sigma}|k|^{-\tau}\}.
\end{split}
\end{equation}
Since we assume $\omega\in DC_{\gamma,\tau},$ for the fixed $E,$ there exists only one $k\in\ZZ^{d}$ such that \eqref{20231003+} is violated, thus, the segment $\mathcal{I}_{k}$ is well-defined. The union of the segments $\mathcal{I}_{k}$ is the set $\Gamma_{1}^{(1)}.$

For these $E\in\mathcal{I}_{k_{0}},$ with $0<|k_{0}|\leq N_{0},$ we construct a time dependent matrix (in $su(1,1)$ topology)
\begin{equation}\label{202310035}
\begin{split}
H_{k_{0}}(\theta)=C_{A_{0}}Q_{k_{0}}(\theta)C_{A_{0}}^{-1}
\end{split}
\end{equation}
with
\begin{equation}\label{202310036}
\begin{split}
Q_{k_{0}}(\theta)=\left(
\begin{matrix}
e^{\mi2^{-1}\langle k_{0},\theta\rangle},0
\\
0,\ \ e^{-\mi2^{-1}\langle k_{0},\theta\rangle}
\end{matrix}
\right),\ C_{A_{0}}=\left(
\begin{matrix}
-1,-1
\\
e^{i\alpha_{0}}+E,\ \ e^{-i\alpha_{0}}+E
\end{matrix}
\right),
\end{split}
\end{equation}
conjugating $(\omega,A_{0}e^{f_{0}})$ to a new cocycle $(\omega,\tilde{A}_{0}e^{\tilde{f}_{0}}).$ Note that the matrix $C_{A_{0}}$ and $C^{-1}_{A_{0}}$ in \eqref{202310035} are constant ones, and will not change the eigenvalues, thus $\tilde{A}_{0}$ has
eigenvalues $\me^{\pm\mi \widetilde{\xi}_{0}}$ with
\begin{equation}\label{202310037}
\begin{split}
\widetilde{\xi}_{0}:=\xi_{0}-\langle k_{0}\rangle_{\omega},
\end{split}
\end{equation}
and the estimates
\begin{equation}\label{202310051}
\begin{split}
|\widetilde{\xi}_{0}|\leq \varepsilon_{0}^{\sigma}|k|^{-\tau}<\varepsilon_{0}^{\sigma}.
\end{split}
\end{equation}
The inequality above, together with the fact $\omega\in DC_{\gamma,\tau},$ yields,
\begin{equation*}
\begin{split}
|\widetilde{\xi}_{0}-\langle k\rangle_{\omega}|\geq \varepsilon_{0}^{\sigma}|k|^{-\tau},0\leq |k|\leq N_{0},
\end{split}
\end{equation*}
that is \eqref{20231003+} is fulfilled by $\widetilde{\xi}_{0}.$ Thus, with a near-identity change of variables, one can conjugate $(\omega,\tilde{A}_{0}\me^{\tilde{f}_{0}})$ to $(\omega,A_{1}\me^{f_{1}}),$
where $|f_{1}|$ is of $\varepsilon_{1}$ and $A_{1}\in SL(2,\RR)$ possesses eigenvalues $\me^{\pm \mi\alpha_{1}}.$ Set
\begin{equation}\label{202310050}
\begin{split}
\xi_{1}=\mathrm{Re}\alpha_{1},\ \
\rho_{1}=\xi_{1}+\langle k_{0}\rangle_{\omega},
\end{split}
\end{equation}
then we have
\begin{equation*}
\begin{split}
|\partial_{E}^{s}(\xi_{1}-\widetilde{\xi}_{0})|_{\Gamma_{1}^{(1)}}\leq \varepsilon_{0}^{1/4},\ s=1,2,3,
\end{split}
\end{equation*}
which, together with \eqref{202310037}, implies
\begin{equation}\label{202310038}
\begin{split}
|\partial_{E}^{s}(\xi_{1}-\xi_{0}+\langle k_{0}\rangle_{\omega})|_{\Gamma_{1}^{(1)}}\leq \varepsilon_{0}^{1/4},\ s=1,2,3.
\end{split}
\end{equation}

The estimates in \eqref{202401121} and \eqref{202310038} imply that the estimate
in \eqref{202310033} holds with $J=1.$

We proceed the induction. Assume that we have gotten the cocycle $(\omega, A_{J}), J=0,\cdots,\ell,$ with the related estimates and will construct the change of variables to transform $(\omega, A_{\ell})$ to $(\omega, A_{\ell+1})$ with the related estimates.
In this iteration, the non-resonance
condition is
\begin{equation}\label{202310039}
\begin{split}
|\xi_{\ell}-\langle k\rangle_{\omega}|\geq \varepsilon_{\ell}^{\sigma}|k|^{-\tau},
\ 0\leq |k|\leq N_{\ell}.
\end{split}
\end{equation}

We denote $\Gamma_{j}^{(\ell+1)} (0\leq j\leq \ell)$ by the set of $E\in\Gamma_{j}^{(\ell)}(0\leq j\leq \ell)$ such that
the equation \eqref{202310038} holds with $\ell$ and $\ell+1$ in place of $0$ and $1,$ respectively. Then, for $E\in\Gamma_{j}^{(\ell+1)}(0\leq j\leq \ell),$ one construct
a near-identity transformation conjugating $(\omega,A_{\ell}\me^{f_{\ell}})$ to
$(\omega,A_{\ell+1}\me^{f_{\ell+1}})$
where $|f_{\ell+1}|$ is of $\varepsilon_{\ell+1}$ and $A_{\ell+1}\in SL(2,\RR)$ possesses eigenvalues $\me^{\pm \mi\alpha_{\ell+1}}.$ Set $\xi_{\ell+1}=\mathrm{Re}\alpha_{\ell+1},$ then we have
\begin{equation}\label{202401160}
\begin{split}
|\partial_{E}^{s}(\xi_{\ell+1}-\xi_{\ell})|_{\Gamma_{j}^{(\ell+1)}}\leq \varepsilon_{\ell}^{1/4},\ s=1,2,3,0\leq j\leq \ell.
\end{split}
\end{equation}

Set
\begin{equation}\label{20231004+}
\begin{split}
\Gamma_{\ell+1}^{(\ell+1)}:=\cup_{0\leq j\leq\ell}\big(\Gamma_{j}^{(\ell)}\setminus\Gamma_{j}^{(\ell+1)}\big).
\end{split}
\end{equation}
Thus, for $E\in\Gamma_{\ell+1}^{(\ell+1)},$ we know that the equation \eqref{202310039} is
violated. In this case, we  proceed the same operations as the ones in the case $E\in\Gamma_{1}^{(1)}.$ Moreover, with the similar discussions above, we also prove that the estimate in \eqref{202310038} holds with the indexes $0$ and $1$ replaced by $\ell$ and $\ell+1,$ respectively.

The discussions above imply that the estimate in \eqref{202310033} holds with $J=\ell+1,$ therefore for $J\in\NN.$

Now, we have constructed a family of sets $\{\Gamma_{j}^{(J)}\}_{j=0,\cdots,J,J\geq1}$ and set
\begin{equation}\label{202310040}
\begin{split}
\Sigma_{j}:=\bigcap_{J\geq j}\Gamma_{j}^{(J)}\setminus
\bigcup_{k\in\ZZ^{d}}\rho^{-1}(\langle k\rangle_{\omega}).
\end{split}
\end{equation}

Up to here, we have finished the proof of Theorem~\ref{mainresult}.
\begin{comment}
\begin{remark}\label{remark}
In Theorem~\ref{mainresult} we bound the quantity of the connected components of
$\cup_{l=0}^{J}\{\Gamma_{l}^{(J)}:|\sin\xi_{J}|\geq3\varepsilon_{J}^{1/20}/2\}$
as $3^{J^{2}}|\ln\varepsilon_{0}|^{2J^{2}d},$ which is $|\ln\varepsilon_{0}|^{2J^{2}d}$ in \cite{zhao20}. Actually, for the given interval constructed in \cite{zhao20}, the two end points are of the form $\langle k\rangle_{\omega}$ for some $k\in\ZZ^{d},$ (see \cite{Johnsonm82} for the discussions about the Gap-Labelling Theorem) which is constant. We will separate this interval into three subintervals, two of which are the small neighborhoods of two end points and the third one is the interior of this interval.
\end{remark}
\end{comment}

Set
\begin{equation}\label{202308146+}
\begin{split}
\rho_{J}:=\xi_{J}+\sum_{l=0}^{J-1}\langle k\rangle_{\omega}
\end{split}
\end{equation}
with $\rho_{0}=\xi_{0}$. Then $\{\rho_{J}\}_{J\in\ZZ}$ is an approximation of $\rho$
with
\begin{equation}\label{202308146}
\begin{split}
|\rho_{J}-\rho|_{\Sigma_{j}}\leq
\varepsilon_{J}^{1/4}, \forall J\geq 0.
\end{split}
\end{equation}
The following estimates will play similar role that (19) does in \cite{zhao20}.
\begin{lemma}\label{resonantcase}
Set $J\in\NN,$ and $(E_{*},E_{**})\subset\Gamma_{j}^{(J)},0\leq j\leq J,$ be any connected component. Assume $E\in(E_{*},E_{**}),$ then for $\rho_{J}$ defined by \eqref{202308146+}, we have
\begin{equation}\label{202401120}
\begin{split}
\rho'_{0}
=\frac{1}{2\sin\xi_{0}},\ \rho''_{0}
=\frac{-\cos\xi_{0}}{4\sin^{3}\xi_{0}},\ \rho'''_{0}
=\frac{1+2\cos^{2}\xi_{0}}{8\sin^{5}\xi_{0}},
\end{split}
\end{equation}
and, for $j=0,1,\cdots,J, J\geq1,$
\begin{equation}\label{202309096}
\begin{split}
|\rho'_{J}-(2\sin\xi_{0})^{-1}|_{\Gamma_{j}^{(J)}}
&=O_{1}(\varepsilon_{j}^{1/4}),\\
|\rho_{J}'^{-1}\rho_{J}''+\frac{\cos\xi_{0}}{2\sin^{2}\xi_{0}}|_{\Gamma_{j}^{(J)}}
&=
\frac{2\sin^{2}\xi_{0}O_{2}(\varepsilon_{0}^{1/4})
+\cos\xi_{0}O_{1}(\varepsilon_{0}^{1/4})
}
{\sin\xi_{0}(1+O_{1}(\varepsilon_{0}^{1/4}))}
,\\
|\rho_{J}'^{-1}\rho_{J}'''-
\frac{1+2\cos^{2}\xi_{0}}{4\sin^{4}\xi_{0}}|_{\Gamma_{j}^{(J)}}
&=\frac{4\sin^{4}\xi_{0}O_{3}(\varepsilon_{0}^{1/4})
-O_{1}(\varepsilon_{0}^{1/4})(1+2\cos^{2}\xi_{0})
}
{2\sin^{3}\xi_{0}(1+O_{1}(\varepsilon_{0}^{1/4}))}.
\end{split}
\end{equation}
\end{lemma}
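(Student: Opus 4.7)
The proof naturally splits into three stages. The first equation \eqref{202401120} is a purely algebraic computation: since $A_0$ has trace $-E$ and eigenvalues $e^{\pm i\xi_0}$, we have $2\cos\xi_0 = -E$, so $\rho_0 = \xi_0$ is implicitly defined by $E = -2\cos\xi_0$. Differentiating this relation once gives $\xi_0' = 1/(2\sin\xi_0)$; differentiating once more and simplifying yields $\xi_0'' = -\cos\xi_0/(4\sin^3\xi_0)$; and applying the quotient rule a third time, using $(\sin\xi_0)' = \cos\xi_0 \cdot \xi_0'$ and $(\cos\xi_0)' = -\sin\xi_0\cdot\xi_0'$, produces $\xi_0''' = (1+2\cos^2\xi_0)/(8\sin^5\xi_0)$. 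Since $\rho_0 = \xi_0$, this gives \eqref{202401120}.

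The heart of the argument is the telescoping identity
\[
\rho_J = \xi_J + \sum_{l=0}^{J-1}\langle k_l\rangle_\omega = \xi_0 + \sum_{l=0}^{J-1}\bigl(\xi_{l+1} + \langle k_l\rangle_\omega - \xi_l\bigr),
\]
which, together with the fact (established in the sketch of Theorem~\ref{mainresult}) that each $k_l$ is constant on every connected component of the set $\cup_{j''\leq l+1}\Gamma_{j''}^{(l+1)}$, lets us differentiate termwise. For any $E\in\Gamma_j^{(J)}$ and any $0\leq l\leq J-1$, the nesting $\Gamma_j^{(J)}\subset\cup_{j''}\Gamma_{j''}^{(l+1)}$ (inherited from the inductive construction) means the one-step estimate \eqref{202310033} applies at the $(l+1)$-th stage, giving $|\partial_E^s(\xi_{l+1}+\langle k_l\rangle_\omega -\xi_l)|\leq \varepsilon_l^{1/4}$ for $s=1,2,3$. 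Setting $g := \rho_J-\rho_0 = \sum_{l=0}^{J-1}(\xi_{l+1}+\langle k_l\rangle_\omega-\xi_l)$, the super-exponential decay $\varepsilon_{l+1}=\varepsilon_l^{1+\sigma}$ turns the sum into a geometric series, so $|g^{(s)}|\leq 2\varepsilon_0^{1/4}$ for $s=1,2,3$, i.e.\ $g^{(s)} = O_s(\varepsilon_0^{1/4})$. This yields the first line of \eqref{202309096}.

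For the remaining two lines I treat $\rho_J^{(s)}$ as a perturbation of $\rho_0^{(s)}$ through $g$. A direct subtraction gives
\[
\rho_J'^{-1}\rho_J'' - \rho_0'^{-1}\rho_0'' = \frac{g''\rho_0' - g'\rho_0''}{\rho_0'(\rho_0'+g')},\qquad
\rho_J'^{-1}\rho_J''' - \rho_0'^{-1}\rho_0''' = \frac{g'''\rho_0' - g'\rho_0'''}{\rho_0'(\rho_0'+g')}.
\]
Substituting the explicit forms $\rho_0'=(2\sin\xi_0)^{-1}$, $\rho_0''=-\cos\xi_0/(4\sin^3\xi_0)$, $\rho_0'''=(1+2\cos^2\xi_0)/(8\sin^5\xi_0)$ and clearing denominators produces exactly the fractions stated in \eqref{202309096}, with $O_s(\varepsilon_0^{1/4})$ arising from the bounds on $g^{(s)}$ and the $(1+O_1(\varepsilon_0^{1/4}))$ factor in each denominator coming from $1+2\sin\xi_0\, g'$.

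The main obstacle is not the algebra in the last step but the bookkeeping in the second: one must verify that the one-step estimate \eqref{202310033} can genuinely be invoked at every $l$ along the telescope for a fixed $E \in \Gamma_j^{(J)}$, regardless of whether the $l$-th step was resonant or not. This reduces to the covering property $\cup_{j''\leq l+1}\Gamma_{j''}^{(l+1)} \supseteq \Gamma_j^{(J)}$ together with the local constancy of $k_l$ on each such piece, both of which are built into the inductive construction recalled in the sketch of Theorem~\ref{mainresult}.
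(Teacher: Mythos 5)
Your proposal is correct and takes essentially the same route as the paper: both reduce \eqref{202309096} to repeated use of the one-step estimate \eqref{202310033}, summing a super-exponentially decaying series $\sum_l \varepsilon_l^{1/4} \lesssim \varepsilon_0^{1/4}$, and then substituting the explicit formulas \eqref{202401120} into the algebraic differences $\rho_J'^{-1}\rho_J'' - \rho_0'^{-1}\rho_0''$ etc.; the only cosmetic difference is that you unroll the paper's induction on $J$ into a single telescoping sum $g = \rho_J - \rho_0 = \sum_{l=0}^{J-1}(\xi_{l+1}+\langle k_l\rangle_\omega - \xi_l)$ with the covering property $\Gamma_j^{(J)}\subset \cup_{j''}\Gamma_{j''}^{(l+1)}$ made explicit, rather than appealing to the inductive hypothesis at stage $\ell$. (Note that both your argument and the paper's own derivation actually establish the first line of \eqref{202309096} with $O_1(\varepsilon_0^{1/4})$ rather than the nominally sharper $O_1(\varepsilon_j^{1/4})$ appearing in the lemma statement; this is what is used downstream, so it is not a defect of your proof.)
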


\begin{proof}
We prove the estimates in \eqref{202401120} first.
Set $
A_{0}=\left(
\begin{matrix}
-E,-1
\\
1,\ \ 0
\end{matrix}
\right)$ with eigenvalues $\me^{\mi\xi_{0}}$ and $\me^{-\mi\xi_{0}}.$ Then
\begin{equation*}
\begin{split}
tr A_{0}=-E=2\cos\xi_{0},
\end{split}
\end{equation*}
which, together with $\rho_{0}=\xi_{0},$ yields the estimates in \eqref{202401120}.

Now, we consider the estimates in \eqref{202309096}. Note that
\begin{equation}\label{202310054}
\begin{split}
\rho_{1}=\xi_{1}+\langle k_{0}\rangle_{\omega}
=\xi_{0}+\widetilde{\rho}_{1}(\xi_{0}),
\end{split}
\end{equation}
where
\begin{equation*}
\begin{split}
\widetilde{\rho}_{1}(E)=\xi_{1}+\langle k_{0}\rangle_{\omega}-\xi_{0}.
\end{split}
\end{equation*}
By applying \eqref{202310033} with $J=1$ we get
\begin{equation*}
\begin{split}
|\partial_{E}^{s}\widetilde{\rho}_{1}(E)|_{\Gamma_{j}^{(1)}}
=O_{s}(\varepsilon_{\ell}^{1/4}),\ s=1,2,3,j=0,1.
\end{split}
\end{equation*}
The estimates above, together with the estimates in \eqref{202401120} and \eqref{202310054}, yield
\begin{equation*}
\begin{split}
\big|\rho'_{1}-(2\sin\xi_{0})^{-1}\big|_{\Gamma_{j}^{(1)}}
&=O_{1}(\varepsilon_{0}^{1/4}),j=0,1,\\
\big|\rho''_{1}+\frac{\cos\xi_{0}}{4\sin^{3}\xi_{0}}\big|_{\Gamma_{j}^{(1)}}
&=O_{2}(\varepsilon_{0}^{1/4}),j=0,1,\\
\big|\rho'''_{1}-\frac{1+2\cos^{2}\xi_{0}}{8\sin^{5}\xi_{0}}\big|_{\Gamma_{j}^{(1)}}
&=O_{3}(\varepsilon_{0}^{1/4}),j=0,1.
\end{split}
\end{equation*}
Thus, we have
\begin{equation*}
\begin{split}
\big|\rho'^{-1}_{1}\rho''_{1}+\frac{\cos\xi_{0}}{2\sin^{2}\xi_{0}}\big|_{\Gamma_{j}^{(1)}}
=
\frac{2\sin^{2}\xi_{0}O_{2}(\varepsilon_{0}^{1/4})
+\cos\xi_{0}O_{1}(\varepsilon_{0}^{1/4})
}
{\sin\xi_{0}(1+O_{1}(\varepsilon_{0}^{1/4}))},
\end{split}
\end{equation*}
and
\begin{equation*}
\begin{split}
\big|\rho'^{-1}_{1}\rho'''_{1}-\frac{1+2\cos^{2}\xi_{0}}{4\sin^{4}\xi_{0}}
\big|_{\Gamma_{j}^{(1)}}
&=\frac{4\sin^{4}\xi_{0}O_{3}(\varepsilon_{0}^{1/4})
-O_{1}(\varepsilon_{0}^{1/4})(1+2\cos^{2}\xi_{0})
}
{2\sin^{3}\xi_{0}(1+O_{1}(\varepsilon_{0}^{1/4}))}.
\end{split}
\end{equation*}
That is the estimates in \eqref{202309096} hold with $J=1.$

Now, assume that we have proved the cases $J=1,2,\cdots,\ell, \ell\geq1,$ and then we consider the case $J=\ell+1.$ Note that
\begin{equation}\label{202401114}
\begin{split}
\rho_{\ell+1}=\xi_{\ell}+\sum_{j=0}^{\ell}\langle k_{j}\rangle_{\omega}
=\rho_{\ell}+\widetilde{\rho}_{\ell+1}(\xi_{\ell}),
\end{split}
\end{equation}
where
\begin{equation*}
\begin{split}
\widetilde{\rho}_{\ell+1}(E)=\xi_{\ell+1}+\langle k_{\ell}\rangle_{\omega}-\xi_{\ell},
\end{split}
\end{equation*}
and by applying \eqref{202310033} with $J=\ell$ we get
\begin{equation*}
\begin{split}
|\partial_{E}^{s}\widetilde{\rho}_{\ell+1}(E)|_{\Gamma_{j}^{(\ell+1)}}
=O_{s}(\varepsilon_{\ell}^{1/4}),\ s=1,2,3,j=0,1,\cdots,\ell+1,
\end{split}
\end{equation*}
which, together with \eqref{202401114} and the estimates in \eqref{202309096} with $J=\ell,$ imply
\begin{equation*}
\begin{split}
\big|\rho'_{\ell+1}-(2\sin\xi_{0})^{-1}\big|_{\Gamma_{j}^{(\ell+1)}}
&=\sum_{j=0}^{\ell}O_{1}(\varepsilon_{j}^{1/4}),\\
\big|\rho''_{\ell+1}+\frac{\cos\xi_{0}}{4\sin^{3}\xi_{0}}\big|_{\Gamma_{j}^{(\ell+1)}}
&=\sum_{j=0}^{\ell}O_{2}(\varepsilon_{j}^{1/4}),\\
\big|\rho'''_{\ell+1}-\frac{1+2\cos^{2}\xi_{0}}{8\sin^{5}\xi_{0}}
\big|_{\Gamma_{j}^{(\ell+1)}}
&=\sum_{j=0}^{\ell}O_{3}(\varepsilon_{j}^{1/4}).
\end{split}
\end{equation*}
The estimates above yield the estimates in \eqref{202309096} with $J=\ell+1.$
Therefore, for all $J\in\RR.$
\end{proof}
\subsection{Spectral transform}
For $E\in\Sigma,$ let $\mathcal{K}(E)$ and $\mathcal{J}(E)$ be two linearly independent generalized eigenvectors of $H_{\theta}$ and consider the spectral transform $\mathcal{S}(q)$ defined as follows: for any $q\in\ell^{2},$ set
\begin{equation}\label{202308147}
\begin{split}
S(q)(t):=\left(
\begin{matrix}
\sum_{n}q_{n}(t)\mathcal{K}_{n}(E)\\
\\
\sum_{n}q_{n}(t)\mathcal{J}_{n}(E)
\end{matrix}
\right).
\end{split}
\end{equation}
Given any matrix of measures on $\RR,$ for example $d\psi:=\left(
\begin{matrix}
d\psi_{11},d\psi_{12}
\\
d\psi_{21},d\psi_{22}
\end{matrix}
\right),$
let $L^{2}(d\psi)$ be the space of the vectors $G=(g_{j})_{j=1,2},$ with $g_{j}$ functions of $E\in\RR$ satisfying
\begin{equation}\label{202308148}
\begin{split}
\|G\|_{L^{2}(d\psi)}^{2}:=\sum_{j,k=1,2}\int_{\RR}
g_{j}\overline{g}_{k}d\psi_{jk}<\infty.
\end{split}
\end{equation}

\begin{proposition}\label{proposition}
Set $\widetilde{\Sigma}:=\cup_{j\geq0}\Sigma_{j},$ which is the full measure subset of the spectrum. Then for any fixed $\theta\in\TT^{d},$ any $E\in\widetilde{\Sigma},$
there exist two linearly independent generalized eigenvectors $\mathcal{K}(E)$
and $\mathcal{J}(E)$ of $H_{\theta}$ with the following properties: Define the spectral transform according to \eqref{202308147} and consider the matrix of measures $d\psi$ given by
\begin{equation}\label{202308149}
\begin{split}
d\psi|_{\Sigma}=\frac{1}{\pi}\left(
\begin{matrix}
\rho',0
\\
0,\rho'
\end{matrix}
\right)dE,\ d\psi|_{\RR\setminus\Sigma}=0,
\end{split}
\end{equation}
we have, for $q\in\ell^{2},$
\begin{equation}\label{202308120}
\begin{split}
(1-\varepsilon_{0}^{\sigma^{2}/10})\|q\|_{\ell^{2}}^{2}\leq
\|S(q)\|_{L^{2}(d\varphi)}^{2}
\leq(1+\varepsilon_{0}^{\sigma^{2}/10})\|q\|_{\ell^{2}}^{2},
\end{split}
\end{equation}
and
\begin{equation}\label{202308121}
\begin{split}
|\frac{1}{\pi}\int_{\Sigma}\big\{g_{1}(E)\mathcal{K}_{n}(E)
+g_{2}(E)\mathcal{J}_{n}(E)\big\}\rho'dE-q_{n}|
\leq\varepsilon_{0}^{\sigma^{2}/10}\|q\|_{\ell^{\infty}}.
\end{split}
\end{equation}
Furthermore, the functions $\mathcal{K}_{n}(E)$ and $\mathcal{J}_{n}(E)$ have the following properties:
\begin{equation}\label{202308122}
\begin{split}
\mathcal{K}_{n}(E)&=\sum_{n_{*}=n,n\pm1}\beta_{n,n_{*}}(E)\sin n_{*}\rho(E),\\
\mathcal{J}_{n}(E)&=\sum_{n_{*}=n,n\pm1}\beta_{n,n_{*}}(E)\cos n_{*}\rho(E)
\end{split}
\end{equation}
with $\rho=\rho(A_{0}+F_{0})$ and
\begin{equation}\label{202308123}
\begin{split}
|\beta_{n,n_{*}}-\delta_{n,n_{*}}|_{\Sigma_{0}}\leq\varepsilon_{0}^{1/4},
|\beta_{n,n_{*}}|_{\Sigma_{j+1}}\leq\varepsilon_{j}^{\sigma},j\geq0.
\end{split}
\end{equation}

Given any $J\in\NN,$ there exist $\beta^{J}_{n,n_{*}},$ smooth on each connected component of $\Gamma_{j}^{(J)},$  satisfying
\begin{equation}\label{202308124}
\begin{split}
|\beta_{n,n_{*}}^{J}-\delta_{n,n_{*}}|_{C^{2}(\Gamma_{0}^{J})}
\leq\varepsilon_{0}^{1/4},
\end{split}
\end{equation}
and if $J\geq1,$
\begin{equation}\label{202308125}
\begin{split}
|\beta_{n,n_{*}}^{J}|_{C^{1}(\Gamma_{j+1}^{J})}\leq\varepsilon_{j}^{3\sigma},0\leq j\leq J-1.
\end{split}
\end{equation}
Moreover,
\begin{equation}\label{202308126}
\begin{split}
|\beta_{n,n_{*}}^{J}-\beta_{n,n_{*}}|_{\Sigma_{j}}
\leq10\varepsilon_{J}^{1/4},0\leq j\leq J.
\end{split}
\end{equation}
\end{proposition}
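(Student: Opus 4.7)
\textbf{Proof plan for Proposition~\ref{proposition}.} The strategy is to convert the reducibility from Theorem~\ref{mainresult} into a spectral theorem for $H_\theta$. On $\widetilde{\Sigma}$, equation \eqref{202308131} supplies a conjugacy $Z(\cdot+\omega)^{-1}(A_0+F_0(\cdot))Z(\cdot)=B$ whose eigenvalues are $\me^{\pm\mi\xi}$. Diagonalize $B=\mathcal{D}\,\mathrm{diag}(\me^{\mi\xi},\me^{-\mi\xi})\,\mathcal{D}^{-1}$ by a constant complex matrix and set $\widetilde Z:=Z\mathcal{D}$; then the iterated cocycle equals $\widetilde Z(\theta+n\omega)\,\mathrm{diag}(\me^{\mi n\xi},\me^{-\mi n\xi})\,\widetilde Z(\theta)^{-1}$. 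Two linearly independent generalized eigenvectors of $H_\theta$ are produced by reading off the first component of this iteration and taking real and imaginary parts. Using $\xi+\sum_l\langle k_l\rangle_\omega=\rho$ from \eqref{202308134}, the phases $\me^{\pm\mi n\xi}$ rewrite in terms of $\sin n_*\rho$ and $\cos n_*\rho$ for $n_*\in\{n-1,n,n+1\}$; the restriction to three neighboring indices reflects the fact that the cocycle acts on the two-vector $(x_n,x_{n-1})^T$ and $\widetilde Z$'s $\theta$-dependence shifts the index by at most one. This gives the precise form \eqref{202308122} with $\beta_{n,n_*}$ obtained from entries of $\widetilde Z(\theta+n_*\omega)$.

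The estimates \eqref{202308123} then follow from \eqref{202308132}: on $\Sigma_0$ both $Z$ and $\mathcal{D}$ are $\varepsilon_0^{1/3}$-close to the normalized diagonalization of $A_0$, so $\beta_{n,n_*}-\delta_{n,n_*}=O(\varepsilon_0^{1/4})$; on $\Sigma_{j+1}$ the amplification $|B|\leq N_j^{10\tau}$ is absorbed by the exponential smallness from $|\xi|_{\Sigma_{j+1}}<2\varepsilon_j^\sigma$ (see \eqref{202308134}), yielding $|\beta_{n,n_*}|\leq\varepsilon_j^\sigma$. For the Plancherel-type identities \eqref{202308120}--\eqref{202308121} I invoke the classical spectral theorem for ergodic Schr\"odinger operators together with the rotation-number/integrated-density-of-states relation: the spectral measure of $H_\theta$ has density proportional to $\rho'(E)\,dE$ on $\Sigma$, which is exactly the matrix of measures $d\psi$ in \eqref{202308149}. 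The $SL(2,\RR)$ structure of $\widetilde Z$ makes $\{\mathcal{K}_n,\mathcal{J}_n\}$ an almost-orthonormal basis in $L^2(d\psi)$, giving \eqref{202308120} with error $\varepsilon_0^{\sigma^2/10}$ once the small-measure resonant contributions \eqref{202308130} are summed over $j\geq 0$. The inversion formula \eqref{202308121} follows by duality, testing against Dirac-type sequences in $\ell^\infty$.

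For the smooth approximants $\beta^J_{n,n_*}$, the same construction is repeated with the finite-stage reducibility \eqref{202308136}, diagonalizing $A_J$ smoothly on each $\Gamma_j^{(J)}$ wherever $|\sin\xi_J|\geq 3\varepsilon_J^{1/20}/2$ so that the diagonalization matrix is $C^3$-smooth in $E$. The bounds \eqref{202308137}--\eqref{20230813+} on $Z_J$ and $A_J$ translate directly into \eqref{202308124}--\eqref{202308125}, and the convergence \eqref{202308126} follows from combining the uniform closeness $|Z_J-Z|_{\Sigma_j}\lesssim\varepsilon_J^{1/4}$ (obtained by a telescoping argument through the KAM scheme) with the smooth dependence of the diagonalization matrices on $E$. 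The main technical obstacle I anticipate is the careful bookkeeping needed to pin down $d\psi$ with the precise normalization $\rho'\,dE/\pi$: this requires reconciling the rotation-number identity with the almost-orthogonality of $\mathcal{K},\mathcal{J}$ coming from the $SL(2,\RR)$ conjugacy, and then patching the identity across the full-measure decomposition $\widetilde\Sigma=\cup_j\Sigma_j$ while controlling error contributions from each resonant stratum $\Sigma_{j+1}$.
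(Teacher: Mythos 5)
Your overall route is the same as the paper's: use the KAM conjugacy from Theorem~\ref{mainresult} to build Bloch waves of $H_\theta$ on $\widetilde\Sigma$, express them via $\sin(n_*\rho)$ and $\cos(n_*\rho)$ with coefficients $\beta_{n,n_*}$ read off from the conjugating map, and then establish an almost--orthonormality leading to \eqref{202308120}--\eqref{202308121}. The finite-$J$ approximants $\beta^J_{n,n_*}$ likewise come from the finite-stage reducibility. So in outline you are aligned with the paper.

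There is, however, a genuine gap in your treatment of the resonant strata $\Sigma_{j+1}$. You write that the growth $|B|_{C_W^1(\Sigma_{j+1})}\leq N_j^{10\tau}$ is ``absorbed by the exponential smallness from $|\xi|_{\Sigma_{j+1}}<2\varepsilon_j^\sigma$'', but this is not a mechanism, and on its own it is false: the columns of $Z\mathcal{D}$ are generalized eigenvectors of size $O(1)$ (the smallness of $|\sin\xi|$ actually makes $\|\mathcal{D}^{-1}\|$ \emph{large}), so without a deliberate rescaling the resulting coefficients $\beta_{n,n_*}$ on $\Sigma_{j+1}$ are not $\leq\varepsilon_j^\sigma$. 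The paper's construction inserts a renormalization step that your plan omits: writing $\widetilde\psi_n=\me^{-\mi n\rho}\widetilde f_n(\theta)$ for the raw Bloch wave, it sets $\psi_n=\me^{\mi n\rho}f_n$ with $f_n=\widetilde f_n$ on $\Sigma_0$ but $f_n=\widetilde f_n\sin^5\xi$ on $\Sigma_{j+1}$. The explicit factor $\sin^5\xi$ (harmless since $\sin\xi\neq 0$ off the gap edges, and small because $|\xi|<2\varepsilon_j^\sigma$) is precisely what makes both $|\beta_{n,n_*}|_{\Sigma_{j+1}}\leq\varepsilon_j^\sigma$ \emph{and} the quantitative Plancherel bound work, because it simultaneously tames the $N_j^{10\tau}$ growth and keeps the integrals $\frac1\pi\int_\Sigma(\mathcal K_n^2+\mathcal J_n^2)\rho'\,dE$ close to $1$. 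Relatedly, your appeal to ``the classical spectral theorem together with the rotation-number/IDS relation'' to obtain \eqref{202308120}--\eqref{202308121} with the \emph{quantitative} error $\varepsilon_0^{\sigma^2/10}$ is not justified as stated: the paper instead computes the diagonal integrals to within $\varepsilon_0^{\sigma^2/8}$ and the off-diagonal ones to within $\varepsilon_0^{\sigma^2/8}|n-m|^{-1-\sigma/6}$, and then uses a summation trick (from \cite{Zhao16}) to pass to the Plancherel and inversion estimates. Without the $\sin^5\xi$ normalization and the explicit near-orthogonality computation with a summable off-diagonal decay, the quantitative conclusions of the proposition are not reached.
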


$\mathbf{Idea\ of \ the \ proof.}$ We will follow the proof idea of Proposition~2.4 in \cite{zhao20}.
We construct Bloch-waves of Schr\"odinger operator $H_{\theta}$ on $\widetilde{\Sigma}.$ The \eqref{202308131} shows that there exist $Z:\widetilde{\Sigma}\times 2\TT^{d}\rightarrow SL(2,C)$ and $B:\widetilde{\Sigma}\rightarrow SL(2,C)$ with two
eigenvalues $\me^{\mi\rho},$ such that
\begin{equation*}
\begin{split}
Z(\cdot+\omega)^{-1}(A_{0}+F_{0}(\cdot))Z(\cdot)=B.
\end{split}
\end{equation*}
Set
\begin{equation*}
\begin{split}
Z=\left(
\begin{matrix}
Z_{11},Z_{12}
\\
Z_{21},Z_{22}
\end{matrix}
\right),\ B=\left(
\begin{matrix}
B_{11},B_{12}
\\
B_{21},B_{22}
\end{matrix}
\right),
\end{split}
\end{equation*}
and
\begin{equation*}
\begin{split}
\widetilde{f}_{n}(\theta):&=[Z_{11}(\theta+(n-1)\omega)B_{12}
-Z_{12}(\theta+(n-1)\omega)B_{11}]
\me^{-\mi \rho}\\
&+Z_{12}(\theta+(n-1)\omega),\\
\widetilde{\psi}_{n}:&=\me^{-\mi n\rho}\widetilde{f}_{n}(\theta).
\end{split}
\end{equation*}
Then the sequence $\widetilde{\psi}:=\{\widetilde{\psi}_{n}\}_{n\in\ZZ}$ satisfies
$H_{\theta}\widetilde{\psi}=E\widetilde{\psi},\forall E\in\widetilde{\Sigma}.$
Set
\begin{equation*}
\begin{split}
\psi_{n}:=\me^{\mi  n\rho}f_{n}(\theta),\ \text{with}\
f_{n}(\theta)=\left\{
\begin{matrix}
\widetilde{f}_{n}(\theta),\ E\in\Sigma_{0},
\\
\widetilde{f}_{n}(\theta)\sin^{5}\xi,E\in\Sigma_{j+1},j\geq0.
\end{matrix}
\right.
\end{split}
\end{equation*}
Define $\mathcal{K}_{n}(E):=\mathrm{Im}(\me^{\mi  n\rho}f_{n}\overline{f}_{0})$ and $\mathcal{J}_{n}(E):=\mathrm{Re}(\me^{\mi  n\rho}f_{n}\overline{f}_{0})$ on $\widetilde{\Sigma}$ and
$\mathcal{K}_{n}|_{\RR\setminus\widetilde{\Sigma}}=
\mathcal{J}_{n}|_{\RR\setminus\widetilde{\Sigma}}=0.$ Moreover, note that
\begin{equation*}
\begin{split}
\me^{\mi n\rho}f_{n}\overline{f}_{0}=\sum_{n_{*}=n,n\pm1}\beta_{n,n_{*}}\me^{\mi n_{*}\rho},
\end{split}
\end{equation*}
where $\beta_{n,n_{*}}$ will be fixed later with certain estimate. Then
\begin{equation*}
\begin{split}
\mathcal{K}_{n}(E)&=\sum_{n_{*}=n,n\pm1}\beta_{n,n_{*}}\sin(n_{*}\rho(E)),\\
\mathcal{J}_{n}(E)&=\sum_{n_{*}=n,n\pm1}\beta_{n,n_{*}}\cos(n_{*}\rho(E)).
\end{split}
\end{equation*}
Then, we get the estimates in \eqref{202308123}-\eqref{202308126} and
\begin{equation*}
\begin{split}
\big|\frac{1}{\pi}\int_{\Sigma}(\mathcal{K}_{n}^{2}+\mathcal{J}_{n}^{2})\rho'dE-1
\big|\leq\varepsilon_{0}^{\sigma^{2}/8},
\end{split}
\end{equation*}
\begin{equation*}
\begin{split}
\big|\frac{1}{\pi}\int_{\Sigma}(\mathcal{K}_{m}\mathcal{K}_{n}
+\mathcal{J}_{m}\mathcal{J}_{n})\rho'dE
\big|\leq\varepsilon_{0}^{\sigma^{2}/8}|n-m|^{-1-\sigma/6}, m\neq n.
\end{split}
\end{equation*}
With the two inequalities above and the same trick in \cite{Zhao16}, we get the estimates in \eqref{202308120} and \eqref {202308121}.
\section{Oscillatory integral on the spectrum}
In Theorem~\ref{mainresult}, we have constructed the division of the interval $[\inf\Sigma,\sup\Sigma].$ Based on this division, we will give the estimate
of an integral on the spectrum, which will be applied in analyzing the time evolution, and deducing dispersion in the next section.
To simplify the notations, we set
\begin{equation}\label{202308116}
\begin{split}
\mathcal{I}_{M}(S)=\int_{S}h\me^{\mi t\sqrt{E+3}}\cos M\rho\cdot\rho'dE,\ S\subset\Sigma,
\end{split}
\end{equation}
and
\begin{equation}\label{202308117}
\begin{split}
\mathcal{I}_{M}^{J}(S)=\int_{S}h_{J}\me^{\mi t\sqrt{E+3}}\cos M\rho\cdot\rho'dE,\ S\subset\Sigma.
\end{split}
\end{equation}
\begin{lemma}\label{mainlemma}
Assume that $h$ maps $\widetilde{\Sigma}$ to $\RR$ such that for any $J\geq0,$ there exists a function $h_{J}$ mapping $\cup_{0\leq j\leq J}\Gamma_{j}^{(J)}$ to $\RR,$
which is $C^{3}$ on each connected component of $\Gamma_{j}^{(J)}$ and satisfies the following estimates:
\begin{equation}\label{202308113}
\begin{split}
|h_{J}-h|_{\Sigma_{j}}\leq10\varepsilon_{J}^{1/4},\ 0\leq j\leq J,
|h_{J}-h|_{\Sigma_{j}}\leq10\varepsilon_{J}^{1/4},\ j\geq J+1,
\end{split}
\end{equation}
\begin{equation}\label{202308114}
\begin{split}
|h_{J}|_{C^{1}(\Gamma_{0}^{(J)})}\leq16/15,
\end{split}
\end{equation}
and
\begin{equation}\label{202308115}
\begin{split}
|h_{J}|_{C^{1}(\Gamma_{j+1}^{(J)})}\leq10\varepsilon_{j}^{3\sigma},\ 0\leq j\leq J-1, J\geq1.
\end{split}
\end{equation}
Then, there exists a positive constant $a,$ such that for any
$M\in\RR,$ one has
\begin{equation}\label{202308115+}
\begin{split}
|\mathcal{I}_{M}(\Sigma)|\leq7624\langle t\rangle^{-1/3}|\ln\varepsilon_{0}|^{ad(\ln\ln(2+\langle t\rangle))^{2}}.
\end{split}
\end{equation}
\end{lemma}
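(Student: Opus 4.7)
The plan is to combine the KAM decomposition $\Sigma\approx\bigcup_{j=0}^{J}\Gamma_{j}^{(J)}$ from Theorem~\ref{mainresult} with van der Corput's lemma of third order applied to the phase $\phi_{\pm}(E)=t\sqrt{E+3}\pm M\rho(E)$. First I fix the truncation level $J=J(t)\sim\ln\ln(2+\langle t\rangle)$: this choice simultaneously makes $\varepsilon_{J}$ polynomially small in $\langle t\rangle^{-1}$ and renders the bound $|\ln\varepsilon_{0}|^{2J^{2}d}$ on the number of connected components compatible with the target exponent $ad(\ln\ln(2+\langle t\rangle))^{2}$.

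With $J$ fixed, I replace $(h,\rho)$ by the smooth approximants $(h_{J},\rho_{J})$, using \eqref{202308113} for the amplitude and \eqref{202308146} for the phase; the substitution error in $\me^{\pm\mi M\rho}$ is of order $|M|\varepsilon_{J}^{1/4}$, and the restriction on $|M|$ that will be imposed by the application of the lemma in Theorem~\ref{theorem1} keeps this harmless. On each connected component of $\Gamma_{j}^{(J)}$ I then perform a further dichotomy based on $|\sin\xi_{J}|$: on the ``resonant'' piece where $|\sin\xi_{J}|<3\varepsilon_{J}^{1/20}/2$ the $\rho$-measure is controlled by \eqref{202308144}--\eqref{202308145}, so combining this with the $L^{\infty}$ bounds \eqref{202308114}--\eqref{202308115} on $h_{J}$ gives a contribution of size $O(\varepsilon_{J}^{1/24})$, which is absorbed into $\langle t\rangle^{-1/3}$ by the choice of $J$.

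On each ``non-resonant'' piece I write $\cos M\rho_{J}=\tfrac{1}{2}(\me^{\mi M\rho_{J}}+\me^{-\mi M\rho_{J}})$ and apply the third-order van der Corput lemma to each of the resulting oscillatory integrals. Lemma~\ref{resonantcase} provides the asymptotic expansions of $\rho_{J}',\rho_{J}'',\rho_{J}'''$ in terms of $\xi_{0}$, and the non-resonant cutoff translates---via \eqref{202308140}---into a lower bound on $|\sin\xi_{0}|$ that keeps those expansions meaningful. Distinguishing the three regimes $|M|\ll|t|$, $|M|\sim|t|$, and $|M|\gg|t|$ identifies which of the two summands in $\phi_{\pm}'''(E)=3t\,(8(E+3)^{5/2})^{-1}\pm M\rho_{J}'''(E)$ dominates, and yields a bound of order $\langle t\rangle^{-1/3}$ per piece. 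Summing over the at most $|\ln\varepsilon_{0}|^{2J^{2}d}$ connected components produces the polylogarithmic factor in \eqref{202308115+}, while the explicit constant $7624$ comes from the numerical constants of van der Corput together with the finite number of substitution steps.

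The principal obstacle is the three-parameter bookkeeping between $\varepsilon_{J}$ (the KAM error at level $J$), $|\sin\xi_{J}|$ (distance to a freshly opened gap), and $|\sin\xi_{0}|$ (distance from the unperturbed band edges $E=\pm 2$): these enter respectively the approximation step, the partition, and the derivative estimates, and they must be threaded through the van der Corput argument while keeping the constants explicit. A subtler difficulty is handling the possible cancellation between the two summands in $\phi_{\pm}'''$ when $t$ and $\pm M$ are of comparable size, which forces a genuine regime split rather than a single uniform lower bound; a second subtlety is verifying that the $|M|\varepsilon_{J}^{1/4}$ loss from replacing $\rho$ by $\rho_{J}$ in the exponential remains compatible with the chosen $J(t)$ over the entire range of $M$ relevant to the dispersive estimate.
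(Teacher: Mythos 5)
Your outline captures the correct skeleton (fix $J\sim\ln\ln\langle t\rangle$, approximate by $h_J,\rho_J$, partition by $|\sin\xi_J|$, discard resonant pieces by measure estimates, apply van der Corput on the rest), but there is a genuine gap in the van der Corput step, and the treatment of large $|M|$ is incomplete.

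You differentiate the phase $\phi_\pm(E)=t\sqrt{E+3}\pm M\rho_J(E)$ in $E$, obtaining
$\phi_\pm'''(E)=\tfrac{3t}{8}(E+3)^{-5/2}\pm M\rho_J'''(E)$,
and then propose a three-way split on the ratio $|M|/|t|$ to decide which summand dominates. This is exactly the place where your plan stalls: in the crossover regime the two terms in $\phi_\pm'''$ (and likewise in $\phi_\pm''$) can cancel, and you have no mechanism to produce the lower bound that van der Corput requires. You flag this yourself as a ``subtler difficulty,'' but an acknowledged difficulty is not a resolution. The paper sidesteps it completely by first changing variables $E\mapsto\rho_J$ on each non-resonant component (which is legal since $\rho_J'>1/3$ there). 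After the change of variables the integrand is $F(\rho_J)\me^{\mi t(\sqrt{E(\rho_J)+3}\pm t^{-1}M\rho_J)}$, and the $M$-dependent part of the phase is \emph{linear} in the integration variable $\rho_J$, hence contributes nothing to the second and third derivatives. The van der Corput lower bounds therefore reduce to the $M$-independent bounds on $\partial_{\rho_J}^2\sqrt{E+3}$ and $\partial_{\rho_J}^3\sqrt{E+3}$ supplied by Lemma~\ref{gamma0estimate}, uniformly in $M$, and no regime split on $|M|/|t|$ is needed at this stage.

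The second issue is your handling of the replacement error $|M|\varepsilon_J^{1/4}$ from $\rho\to\rho_J$. You say this is ``harmless'' because of ``the restriction on $|M|$ that will be imposed by the application of the lemma in Theorem~\ref{theorem1}.'' There is no such restriction: the lemma is asserted for all $M\in\RR$, and indeed in Lemma~\ref{lemma4.1} one takes $M=m_*-n_*$ with $m_*,n_*$ ranging over $\ZZ$, so $|M|$ is unbounded. The paper handles this by a genuine two-case dichotomy at the level of $\mathcal{I}_M(\Sigma)$: for $|M|\geq\tfrac{32}{5}\langle t\rangle^{4/3}$ it invokes the elementary integration-by-parts bound of Lemma~\ref{auxiliarylemma2}, which scales like $(|\ln\varepsilon_0|^{2J^2d}+\langle t\rangle)/|M|$ and already yields $\langle t\rangle^{-1/3}$; only for $|M|<\tfrac{32}{5}\langle t\rangle^{4/3}$ is the van der Corput machinery (Lemma~\ref{auxiliarylemma3}) used, and in that range the term $2|M|\varepsilon_J^{1/4}$ is indeed absorbed because $\varepsilon_J$ has been made super-polynomially small in $\langle t\rangle^{-1}$. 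Without both the change of variables and this large-$|M|$ fallback, your argument cannot close.
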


\subsection{Auxiliary lemmas}
To prove Lemma~\ref{mainlemma}, we give some auxiliary lemmas to transform the estimate about $\mathcal{I}_{M}(\Sigma)$ to the ones about a new integral.
\begin{lemma}\label{auxiliarylemma1}
For any positive $J$ and any $M\in\RR,$ under the assumptions of Lemma~\ref{mainlemma}, one has
\begin{equation}\label{202308118}
\begin{split}
|\mathcal{I}_{M}(\Sigma)-\mathcal{I}_{M}^{J}(\Sigma)|
\leq\varepsilon_{J}^{3\sigma/4}.
\end{split}
\end{equation}
\end{lemma}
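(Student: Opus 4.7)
The plan is to prove \eqref{202308118} by a direct $L^\infty$-type estimate. The key observation is that $\rho'dE = d\rho$, which converts the spectral integral into one against the rotation-number differential, so the range of integration has $d\rho$-measure at most $\pi$. Since the oscillatory factor $\me^{\mi t\sqrt{E+3}}\cos M\rho$ has modulus at most $1$, the entire bound reduces to controlling the $L^1$-norm (with respect to $d\rho$) of $h - h_J$ over $\Sigma$; no cancellation in $t$ or $M$ is needed here, which makes sense because this lemma is meant only to replace $h$ by its smoother approximant $h_J$ before the true oscillatory analysis of Lemma~\ref{mainlemma} begins.

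First I would bound
\begin{equation*}
\bigl|\mathcal{I}_M(\Sigma) - \mathcal{I}_M^J(\Sigma)\bigr| \leq \int_\Sigma |h - h_J|\,\rho'\,dE.
\end{equation*}
Since $|\Sigma\setminus\widetilde{\Sigma}| = 0$ and the $\Sigma_j$'s are mutually disjoint with $\widetilde{\Sigma} = \cup_{j\geq 0}\Sigma_j$ by \eqref{202308130}, and since $(2.1)$ of Theorem~\ref{mainresult} ensures that $h_J$ is defined almost everywhere on each $\Sigma_j$ (via $\Sigma_j \subset \cup_{0\leq l\leq J}\Gamma_l^{(J)}$ up to null sets), I would decompose the integral as $\sum_{j\geq 0}\int_{\Sigma_j}$ and apply the pointwise bound $|h - h_J|_{\Sigma_j}\leq 10\varepsilon_J^{1/4}$ from \eqref{202308113} on every piece. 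This gives
\begin{equation*}
\int_\Sigma |h - h_J|\,\rho'\,dE \leq 10\varepsilon_J^{1/4}\sum_{j\geq 0}|\rho(\Sigma_j)| = 10\varepsilon_J^{1/4}\,|\rho(\widetilde{\Sigma})| \leq 10\pi\varepsilon_J^{1/4},
\end{equation*}
where the last step uses that the rotation number $\rho$ maps $[\inf\Sigma, \sup\Sigma]$ monotonically into $[0,\pi]$.

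To close the remaining gap, I would use that $\sigma = 1/200$ gives $1/4 - 3\sigma/4 > 0$, so choosing $\varepsilon_* = \varepsilon_*(r,\gamma,\tau,d)$ small enough (uniformly in $J$, since $\varepsilon_J \leq \varepsilon_0\leq\varepsilon_*$) forces $10\pi\varepsilon_J^{1/4}\leq\varepsilon_J^{3\sigma/4}$, which is \eqref{202308118}. There is no real technical obstacle here --- the estimate is a clean $L^\infty\times L^1$ bound whose only bookkeeping is checking that $h_J$ is defined almost everywhere on each $\Sigma_j$, which follows transparently from the nested structure of $\{\Gamma_l^{(J)}\}$ in Theorem~\ref{mainresult}. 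Note in particular that hypothesis \eqref{202308113} is used in its strong, uniform-in-$j$ form, so neither of the subtler hypotheses \eqref{202308114}--\eqref{202308115} is required at this stage; those will become essential only when one seeks the quantitative dispersive rate in Lemma~\ref{mainlemma}.
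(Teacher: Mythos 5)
Your reduction $|\mathcal{I}_M(\Sigma)-\mathcal{I}_M^J(\Sigma)|\le\int_\Sigma |h-h_J|\,\rho'\,dE$ and the decomposition over the $\Sigma_j$ are both fine, and the arithmetic $10\pi\varepsilon_J^{1/4}\le\varepsilon_J^{3\sigma/4}$ for small $\varepsilon_*$ is correct. However, the step where you invoke the pointwise bound $|h-h_J|_{\Sigma_j}\le 10\varepsilon_J^{1/4}$ uniformly for \emph{all} $j\ge 0$, including $j\ge J+1$, leans on the second half of hypothesis~\eqref{202308113}, and that clause is almost certainly a misprint: it repeats the $0\le j\le J$ bound verbatim, and more tellingly, when the paper actually verifies the hypotheses of Lemma~\ref{mainlemma} in Lemma~\ref{lemma4.1} by taking $h=\beta_{m,m_*}\beta_{n,n_*}$ and $h_J=\beta_{m,m_*}^J\beta_{n,n_*}^J$, the only closeness estimate available is~\eqref{202308126}, which is stated exclusively for $0\le j\le J$. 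For $j\ge J+1$ the set $\Sigma_j$ can sit inside $\Gamma_0^{(J)}$, where $|h_J|$ is of order one by~\eqref{202308124} while $|h|\lesssim\varepsilon_{j-1}^{2\sigma}$ is tiny by~\eqref{202308123}, so $|h-h_J|$ is \emph{not} small there and the uniform pointwise bound fails.

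The robust way to control the $j\ge J+1$ contribution is not the pointwise closeness of $h$ and $h_J$ but the smallness in measure of the tail: using $|h|,|h_J|\lesssim 2$ (from \eqref{202308114}, \eqref{202308115}, \eqref{202308123}) together with $|\rho(\Sigma_{j+1})|\le 3|\ln\varepsilon_j|^{2d}\varepsilon_j^\sigma$ from~\eqref{202308130}, one gets
\[
\int_{\cup_{j\ge J+1}\Sigma_j}\bigl(|h|+|h_J|\bigr)\rho'\,dE
\;\le\; C\sum_{j\ge J}|\ln\varepsilon_j|^{2d}\varepsilon_j^\sigma
\;\le\; \tfrac12\varepsilon_J^{3\sigma/4},
\]
exactly as the paper does for the analogous term~\eqref{202308154} inside the proof of Lemma~\ref{auxiliarylemma3}. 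Combining this with your argument on $\cup_{0\le j\le J}\Sigma_j$ gives the claimed bound. So the overall shape of your proof is right, but the tail needs the measure-of-$\Sigma_j$ estimate rather than a pointwise $|h-h_J|$ bound, and you should not take the second line of~\eqref{202308113} at face value.
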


\begin{lemma}\label{auxiliarylemma2}
Assume that for some positive $J\geq0$ the function $h_{J}$ satisfies the estimates \eqref{202308114} and \eqref{202308115},
then for every $M\in\RR\setminus\{0\}$ and $t\in\RR,$ one has
\begin{equation}\label{202308119}
\begin{split}
|\mathcal{I}_{M}^{J}(\Sigma)|
\leq32\big\{|\ln\varepsilon_{0}|^{2J^{2}d}
+(\sup\Sigma-\inf\Sigma)\langle t\rangle\big\}(15|M|)^{-1}.
\end{split}
\end{equation}
\end{lemma}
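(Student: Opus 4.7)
The plan is to reduce the estimate to a standard nonstationary-phase integration by parts in $E$, exploiting the primitive
\begin{equation*}
\cos M\rho\cdot\rho'=\frac{1}{M}\,\frac{d}{dE}\sin M\rho,\qquad M\neq 0,
\end{equation*}
so that each integration by parts trades a factor $1/|M|$ for derivatives landing on $h_J$ or on the oscillatory factor $\me^{\mi t\sqrt{E+3}}$. Since $\rho$ is constant on each gap of $\Sigma$ (gap-labelling), $\rho'\equiv 0$ a.e.\ off $\Sigma$, and by Theorem~\ref{mainresult}(2.1) the open set $\cup_{0\leq j\leq J}\Gamma_j^{(J)}$ has closure equal to $[\inf\Sigma,\sup\Sigma]$; consequently
\begin{equation*}
\mathcal{I}_M^J(\Sigma)=\sum_{(E_*,E_{**})}\int_{E_*}^{E_{**}} h_J\,\me^{\mi t\sqrt{E+3}}\cos M\rho\cdot \rho'\,dE,
\end{equation*}
where $(E_*,E_{**})$ ranges over the connected components of $\cup_{0\leq j\leq J}\Gamma_j^{(J)}$, of which there are at most $|\ln\varepsilon_0|^{2J^2 d}$ by the same part of the theorem.

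On each such component I integrate by parts once, producing a boundary term $\frac{1}{M}\bigl[h_J\,\me^{\mi t\sqrt{E+3}}\sin M\rho\bigr]_{E_*}^{E_{**}}$ together with an interior remainder
\begin{equation*}
-\frac{1}{M}\int_{E_*}^{E_{**}}\!\left(h_J'+\frac{\mi t\,h_J}{2\sqrt{E+3}}\right)\me^{\mi t\sqrt{E+3}}\sin M\rho\,dE.
\end{equation*}
For $\varepsilon_0$ small one has $\Sigma\subset[-2-\varepsilon_0,2+\varepsilon_0]$, so $\sqrt{E+3}\geq 1/2$ and $|2\sqrt{E+3}|^{-1}\leq 1$. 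The hypotheses \eqref{202308114}--\eqref{202308115} give $|h_J|_{C^1}\leq 16/15$ uniformly on every component (the bound $10\varepsilon_j^{3\sigma}\ll 16/15$ on the $\Gamma_{j+1}^{(J)}$ pieces is comfortably absorbed). Hence the boundary term on each component is at most $(32/15)/|M|$, while the interior term is at most $\frac{16/15}{|M|}(1+|t|)(E_{**}-E_*)$.

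Summing the boundary contributions over the at most $|\ln\varepsilon_0|^{2J^2 d}$ components, and the interior contributions using $\sum(E_{**}-E_*)\leq \sup\Sigma-\inf\Sigma$ together with $1+|t|\leq 2\langle t\rangle$, delivers
\begin{equation*}
|\mathcal{I}_M^J(\Sigma)|\leq\frac{32}{15|M|}\Bigl\{|\ln\varepsilon_0|^{2J^2 d}+(\sup\Sigma-\inf\Sigma)\langle t\rangle\Bigr\},
\end{equation*}
which is exactly \eqref{202308119}. The only nontrivial inputs are the gap-labelling identity $\rho'=0$ a.e.\ off $\Sigma$ and the component count in Theorem~\ref{mainresult}(2.1); the principal obstacle is simply organising the numerical bookkeeping so that the boundary and interior contributions share the common prefactor $32/(15|M|)$.
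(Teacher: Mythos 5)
The paper omits the proof of this lemma, citing Lemma~3.3 of \cite{zhao20}; your argument is the expected one---integration by parts through the primitive $\frac{1}{M}\sin M\rho$, boundary terms priced by the component count $|\ln\varepsilon_0|^{2J^2d}$ of $\cup_{0\le j\le J}\Gamma_j^{(J)}$ (using $\rho'=0$ off $\Sigma$ to identify $\int_\Sigma$ with the sum over these components), and interior terms priced by $(\sup\Sigma-\inf\Sigma)\langle t\rangle$, adapted to the phase $\me^{\mi t\sqrt{E+3}}$ via $(2\sqrt{E+3})^{-1}\le 1$. Your constant bookkeeping (two endpoints at $16/15$ each giving $32/15$ per component, the same $32/15$ prefactor for the interior via $1+|t|\le 2\langle t\rangle$) reproduces \eqref{202308119} exactly, so the proposal is correct and takes essentially the same route as the paper's reference.
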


The proof of Lemmas~\ref{auxiliarylemma1},~\ref{auxiliarylemma2} are similar with the ones of Lemmas~3.2,3.3 in \cite{zhao20}, we omit the details.

Note that the hypothesis about $M$ in Lemma~\ref{auxiliarylemma2} is $M\in\RR\setminus\{0\},$ which corresponds to $M's$ that are large enough. While the discussions about $M=0$ (without loss of generality, we set $M\in\RR$) is much complicated, which involves the Van der Corput lemma. In this case, we estimate the derivatives of $\partial_{\rho_{J}}^{j}\sqrt{E+3}, j=2,3,$ first.
\begin{lemma}\label{gamma0estimate}
Let $(E_{*},E_{**})\subset\cup_{l=0}^{J}\{\Gamma_{l}^{(J)}
:|\sin\xi_{J}|\geq3\varepsilon_{J}^{1/20}/2\}$ be any connected component, then for any $E\in(E_{*},E_{**}),$ we have
\begin{equation}\label{202308159}
\begin{split}
\Big|\frac{d^{2}\sqrt{E+3}}{d\rho_{J}^{2}}\Big|\geq\frac{1}{200},\ -1\leq\cos\xi_{0}\leq\frac{1}{3} \text{or}\ \frac{3}{5}\leq\cos\xi_{0}\leq1,
\end{split}
\end{equation}
and
\begin{equation}\label{+202308159}
\begin{split}
 \Big|\frac{d^{3}\sqrt{E+3}}{d\rho_{J}^{3}}\Big|\geq\frac{1}{200},\
 \frac{1}{3}<\cos\xi_{0}<\frac{3}{5}.
\end{split}
\end{equation}
\end{lemma}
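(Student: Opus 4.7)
My plan is to prove the lower bounds \eqref{202308159}--\eqref{+202308159} by first computing the relevant derivatives exactly at the zeroth KAM level, where $\rho_0=\xi_0$ and everything can be written in closed form, and then transferring the estimate to general $J$ via the quantitative approximations in Lemma~\ref{resonantcase}. Throughout I will set $f(E):=\sqrt{E+3}$, $c:=\cos\xi_0$, $s:=\sin\xi_0$, and use $\operatorname{tr}A_0=-E=2c$, so that $E+3=3-2c$.

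The inverse-function rule gives
\[
\frac{d^{2}f}{d\rho_J^{2}}=\frac{1}{(\rho_J')^{2}}\Big(f''(E)-f'(E)\,\frac{\rho_J''}{\rho_J'}\Big),
\]
and an analogous (but longer) expression for $d^{3}f/d\rho_J^{3}$ as a rational combination of $f',f'',f'''$ and the quantities $\rho_J'$, $\rho_J''/\rho_J'$, $\rho_J'''/\rho_J'$. Inserting the $J=0$ identities $\rho_0'=(2s)^{-1}$, $\rho_0''/\rho_0'=-c/(2s^{2})$, $\rho_0'''/\rho_0'=(1+2c^{2})/(4s^{4})$ from \eqref{202401120}, together with $f'=(2\sqrt{3-2c})^{-1}$, $f''=-(4(3-2c)^{3/2})^{-1}$, $f'''=3(8(3-2c)^{5/2})^{-1}$, and simplifying via $c^{2}+s^{2}=1$, one obtains the closed-form expressions
\[
\frac{d^{2}\sqrt{E+3}}{d\rho_0^{2}}=-\frac{c^{2}-3c+1}{(3-2c)^{3/2}},\quad\frac{d^{3}\sqrt{E+3}}{d\rho_0^{3}}=-\frac{(c^{2}-3c+6)\,s}{(3-2c)^{5/2}}.
\]
The numerator $c^{2}-3c+1$ has its unique root in $[-1,1]$ at $c_{\star}=(3-\sqrt{5})/2\in(1/3,3/5)$, which explains the case split. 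Elementary calculus then yields $|c^{2}-3c+1|(3-2c)^{-3/2}\ge 1/33$ on $[-1,1/3]\cup[3/5,1]$ (the minimum is attained at $c=1/3$) and $(c^{2}-3c+6)\,|s|(3-2c)^{-5/2}\ge 1/3$ on $[1/3,3/5]$ (using $|s|\ge 4/5$ and $c^{2}-3c+6\ge 4.5$ there); both bounds are comfortably above $1/200$.

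To pass from $\rho_0$ to $\rho_J$ I use Lemma~\ref{resonantcase}: each of $\rho_J'-\rho_0'$, $\rho_J''/\rho_J'-\rho_0''/\rho_0'$, $\rho_J'''/\rho_J'-\rho_0'''/\rho_0'$ is bounded by $O(\varepsilon_0^{1/4})$ divided by a non-negative power of $s$. Plugging these into the chain-rule formulas and expanding, the error between the exact $d^{j}\sqrt{E+3}/d\rho_J^{j}$ and its $J=0$ counterpart is at most $C\varepsilon_0^{1/4}s^{-3}$ for an absolute constant $C$ and $j\in\{2,3\}$. On the set $\cup_l\{\Gamma_l^{(J)}:|\sin\xi_J|\ge 3\varepsilon_J^{1/20}/2\}$ the matching lower bound $|\sin\xi_0|\ge\varepsilon_J^{1/20}$ is obtained block-by-block: on $\Gamma_0^{(J)}$ the closeness estimate \eqref{202308137} gives $|\xi_J-\xi_0|\le C\varepsilon_0^{1/2}\ll\varepsilon_J^{1/20}$, while on a resonant block $\Gamma_{j+1}^{(J)}$ the bound \eqref{202308141} forces $|\sin\xi_J|\le 3\varepsilon_j^{1/4}/2$, which essentially violates the threshold and so excludes the block from the set under consideration. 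The remainder is therefore of order $\varepsilon_0^{1/4}\varepsilon_J^{-3/20}\ll 1/200$ once $\varepsilon_0\le\varepsilon_{*}$ is small enough, and subtracting it from the closed-form lower bounds in the previous paragraph yields \eqref{202308159} and \eqref{+202308159}.

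The hardest part is this last step: the error weights in Lemma~\ref{resonantcase} carry inverse powers of $\sin\xi_0$, so one must convert the hypothesis $|\sin\xi_J|\ge 3\varepsilon_J^{1/20}/2$ into a matching lower bound on $|\sin\xi_0|$, working separately on the non-resonant block $\Gamma_0^{(J)}$ (where $\xi_J$ is a small perturbation of $\xi_0$) and on each resonant block $\Gamma_{j+1}^{(J)}$ (where $\xi_J$ is automatically small by \eqref{202308141}, so the threshold condition essentially excludes the block), while keeping track of the uniformity in $J$ dictated by $\varepsilon_J = \varepsilon_0^{(1+\sigma)^J}$.
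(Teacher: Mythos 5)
Your closed-form formulas at $J=0$,
\[
\frac{d^{2}\sqrt{E+3}}{d\rho_0^{2}}=-\frac{c^{2}-3c+1}{(3-2c)^{3/2}},\qquad
\frac{d^{3}\sqrt{E+3}}{d\rho_0^{3}}=-\frac{(c^{2}-3c+6)\,s}{(3-2c)^{5/2}},
\]
are correct, and the numerical lower bounds over $[-1,1/3]\cup[3/5,1]$ and $[1/3,3/5]$ check out; this tidy algebraic presentation is actually a pleasant simplification of what the paper leaves implicit. However, the transfer to general $J$ contains a genuine error. You claim the discrepancy between $d^{j}\sqrt{E+3}/d\rho_J^{j}$ and its $J=0$ counterpart is at most $C\varepsilon_0^{1/4}s^{-3}$, and you propose to absorb the $s^{-3}$ by deriving $|\sin\xi_0|\gtrsim\varepsilon_J^{1/20}$ from the hypothesis, yielding a remainder of order $\varepsilon_0^{1/4}\varepsilon_J^{-3/20}$. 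This is \emph{not} small uniformly in $J$: since $\varepsilon_J=\varepsilon_0^{(1+\sigma)^J}$ with $\sigma=1/200$, the exponent $3(1+\sigma)^J/20$ exceeds $1/4$ as soon as $J\gtrsim 322$, and then $\varepsilon_0^{1/4}\varepsilon_J^{-3/20}=\varepsilon_0^{1/4-3(1+\sigma)^J/20}\to\infty$. The lemma must hold for every $J$ (in the application $J=J_*(t)$ grows with $t$), so choosing $\varepsilon_*$ small cannot rescue this bound. Incidentally, the conversion $|\sin\xi_J|\ge 3\varepsilon_J^{1/20}/2\Rightarrow|\sin\xi_0|\ge\varepsilon_J^{1/20}$ also fails for large $J$ on $\Gamma_0^{(J)}$, because the perturbation $|\xi_J-\xi_0|=O(\varepsilon_0^{1/4})$ dwarfs $\varepsilon_J^{1/20}$ once $J\gtrsim 322$.

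The way out — and what the paper actually does — is to observe that the singular powers of $s=\sin\xi_0$ in the error terms of Lemma~\ref{resonantcase} are exactly cancelled by the prefactors $(\rho_J')^{-2}\approx 4s^{2}$ and $(\rho_J')^{-3}\approx 8s^{3}$ produced by the chain rule in \eqref{20230821+}--\eqref{202308210}, so that the overall error is $O(\varepsilon_0^{1/4})$ uniformly in $J$ and $\sin\xi_0$. Concretely: in the case of the second derivative the bracket $(E+3)^{-1}+2\rho_J'^{-1}\rho_J''$ has error $O(\varepsilon_0^{1/4})(s+1/s)$, and after multiplying by $\rho_J'^{-2}\approx 4s^{2}$ the error becomes $O(\varepsilon_0^{1/4}s)\le O(\varepsilon_0^{1/4})$, so no lower bound on $|\sin\xi_0|$ is ever needed; in the case of the third derivative the bracket's error is $O(\varepsilon_0^{1/4})/s^{3}$, but the case restriction $1/3<\cos\xi_0<3/5$ already forces $|\sin\xi_0|>4/5$, so the singularity never materialises. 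The hypothesis $|\sin\xi_J|\ge 3\varepsilon_J^{1/20}/2$ is used only to guarantee monotonicity of $\rho_J$ on $(E_*,E_{**})$, hence invertibility, not to control the size of the error. Replacing your step ``error $\le C\varepsilon_0^{1/4}s^{-3}$ and $|s|\gtrsim\varepsilon_J^{1/20}$'' with this careful tracking of $s$-powers would close the gap.
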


\begin{proof}
The fact we focus on the $E's$ such that $|\sin\xi_{J}|\geq3\varepsilon_{J}^{1/20}/2$ implies that there exists a neighborhood of $\langle k\rangle_{\omega}, \forall k\in\ZZ^{d},$ which is separated from the connected component
$(E_{*},E_{**}).$ Thus, $\rho_{J}(E)$ is increasing on $(E_{*},E_{**})$ and the inverse function $E(\rho_{J})$ is well-defined.

The chain rule yields, on $\rho_{J}((E_{*},E_{**})),$
\begin{equation}\label{202309098}
\begin{split}
\frac{\partial E}{\partial\rho_{J}}&=(\rho_{J}')^{-1},\ \
\frac{\partial^{2} E}{\partial\rho_{J}^{2}}=-(\rho_{J}')^{-3}\rho_{J}'',\\
\frac{\partial^{3} E}{\partial\rho_{J}^{3}}&=
(\rho_{J}')^{-3}\big\{3(\rho_{J}'^{-1}\rho_{J}'')^{2}-
\rho_{J}'^{-1}\rho_{J}'''\big\},
\end{split}
\end{equation}
which, together with traditional calculations, shows
\begin{equation}\label{20230821+}
\begin{split}
\frac{\partial\sqrt{E+3}}{\partial\rho_{J}}&=
\frac{1}{2}(E+3)^{\frac{-1}{2}}\frac{dE}{d\rho_{J}},\\
\frac{\partial^{2}\sqrt{E+3}}{\partial\rho_{J}^{2}}
&=-\frac{1}{4}(E+3)^{\frac{-1}{2}}\big\{(E+3)^{-1}
\big(\frac{dE}{d\rho_{J}}\big)^{2}
-2\frac{d^{2}E}{d\rho_{J}^{2}}\big\}\\
&=-\frac{1}{4}(E+3)^{\frac{-1}{2}}\rho_{J}'^{-2}\big\{(E+3)^{-1}
+2\rho_{J}'^{-1}\rho_{J}''\big\},
\end{split}
\end{equation}
and
\begin{equation}\label{202308210}
\begin{split}
\frac{\partial^{3}\sqrt{E+3}}{\partial\rho_{J}^{3}}
&=\frac{3}{8}(E+3)^{\frac{-5}{2}}\big(\frac{dE}{d\rho_{J}}\big)^{3}
-\frac{3}{4}(E+3)^{\frac{-3}{2}}\frac{dE}{d\rho_{J}}
\frac{d^{2}E}{d\rho_{J}^{2}}
\\
&
+\frac{1}{2}(E+3)^{\frac{-1}{2}}\frac{d^{3}E}{d\rho_{J}^{3}}\\
&=\{8(E+3)^{1/2}\rho_{J}'^{3}\}^{-1}\big\{3(E+3)^{-2}
+6(E+3)^{-1}\rho_{J}'^{-1}\rho_{J}''\\
&+12(\rho_{J}'^{-1}\rho_{J}'')^{2}
-4\rho_{J}'^{-1}\rho_{J}'''\}.
\end{split}
\end{equation}
$\mathbf{Case \ one}:$ $-1\leq\cos\xi_{0}\leq\frac{1}{3}$ or $\frac{3}{5}\leq\cos\xi_{0}\leq1.$
The estimates in \eqref{202309096}, together with the fact $E=-2\cos\xi_{0},$ imply
\begin{equation*}
\begin{split}
(E+3)^{-1}&+2\rho_{J}'^{-1}\rho_{J}''
=
\frac{1-3\cos\xi_{0}+\cos^{2}\xi_{0}}{(3-2\cos\xi_{0})\sin^{2}\xi_{0}}\\
&+
\frac{O_{1}(\varepsilon_{0}^{1/4})2\cos\xi_{0}
+4\sin^{2}\xi_{0}O_{2}(\varepsilon_{0}^{1/4})}
{\sin\xi_{0}(1+O_{1}(\varepsilon_{0}^{1/4}))},
\end{split}
\end{equation*}
which, together with \eqref{20230821+}, yields
\begin{equation*}
\begin{split}
\Big|\frac{\partial^{2}\sqrt{E+3}}{\partial\rho_{J}^{2}}\Big|
&=\frac{4\sin^{2}\xi_{0}\big|(E+3)^{-1}+2\rho_{J}'^{-1}\rho_{J}''\big|}
{4(3-2\cos\xi_{0})^{1/2}(1+O_{1}(\varepsilon_{0}^{1/4}))^{2}}
\\
&>\frac{\sin^{2}\xi_{0}}{3}\big\{\frac{1}{50\sin^{2}\xi_{0}}
+\sin^{-1}\xi_{0}O(\varepsilon_{0}^{1/4})\big\}>\frac{1}{200}.
\end{split}
\end{equation*}

$\mathbf{Case \ two}:$ $\frac{1}{3}<\cos\xi_{0}<\frac{3}{5}.$ In this case we have $\frac{4}{5}<|\sin\xi_{0}|<\frac{2\sqrt{2}}{3}.$ Then \eqref{202309096} and \eqref{202308210} yield
\begin{equation*}
\begin{split}
\Big|\frac{\partial^{3}\sqrt{E+3}}{\partial\rho_{J}^{3}}\Big|
&=\{8(E+3)^{1/2}|\rho_{J}'|^{3}\}^{-1}
\Big|\frac{3}{(3-2\cos\xi_{0})^{2}}
-\frac{3\cos\xi_{0}}{(3-2\cos\xi_{0})\sin^{2}\xi_{0}}\\
&+\frac{3\cos^{2}\xi_{0}}{\sin^{4}\xi_{0}}
-\frac{1+2\cos^{2}\xi_{0}}{\sin^{4}\xi_{0}}
+\frac{O(\varepsilon_{0}^{1/4})}{|\sin\xi_{0}|^{3}}\Big|\\
&=\{8(E+3)^{1/2}|\rho_{J}'|^{3}\}^{-1}
\big\{\frac{|-6+3\cos\xi_{0}-\cos^{2}\xi_{0}|}{\sin^{2}\xi_{0}(3-2\cos\xi_{0})^{2}}
+\frac{O(\varepsilon_{0}^{1/4})}{|\sin\xi_{0}|^{3}}\big\}\\
&>\frac{1}{200}.
\end{split}
\end{equation*}
%

\begin{comment}

$\mathbf{Case \ two}:$ $\frac{99}{100}\leq\cos\xi_{0}\leq 1.$ Then \eqref{202309099} and \eqref{20230909++} yields
%
\begin{equation}\label{newnew}
\begin{split}
\big|(E)^{-1}+2\rho_{J}'^{-1}\rho_{J}''\big|\leq
3^{-1}+\sin^{-2}\xi_{0}O_{2}(\varepsilon_{0}^{1/4}),
\end{split}
\end{equation}
%
and
%
\begin{equation*}
\begin{split}
\big|3(E)^{-1}\rho_{J}'^{-1}\rho_{J}''+2\rho_{J}'^{-1}\rho_{J}'''\big|
\geq
\frac{1}{16\sin^{2}\xi_{0}}.
\end{split}
\end{equation*}
%
The two inequalities above, together with \eqref{202308210}, yield
%
\begin{equation}\label{202309221}
\begin{split}
\Big|\frac{\partial^{3}\sqrt{E}}{\partial\rho_{J}^{3}}\Big|&\geq
\{8(E)^{1/2}|\rho_{J}'|^{3}\}^{-1}\big\{
\frac{2}{16\sin^{2}\xi_{0}}-6(9^{-1}\\
&+\frac{\sin^{-2}\xi_{0}O_{2}(\varepsilon_{0}^{1/2})}
{(1+O_{1}(\varepsilon_{0}^{1/4}))^{2}})\big\}\\
&\geq\frac{8\sin^{3}\xi_{0}}{8\sqrt{2}(1-\cos\xi_{0})^{1/2}
(1+O_{1}(\varepsilon_{0}^{1/4}))^{3}}
\frac{1}{20\sin^{2}\xi_{0}}>\frac{1}{50}.
\end{split}
\end{equation}
%
\end{comment}
\end{proof}
\begin{comment}
\begin{lemma}\label{gammajestimate}
Under the hypotheses of Lemma~\ref{auxiliarylemma3}, for any $E\in(E_{*},E_{**})\subset\Gamma_{j+1}^{(J)},\ 0\leq j\leq J-1,$ we have
%
\begin{equation}\label{202310020}
\begin{split}
\text{either}\
\Big|\frac{d^{2}\sqrt{E}}{d\rho_{J}^{2}}\Big|\geq\frac{1}{20}\
\text{or}\ \Big|\frac{d^{3}\sqrt{E}}{d\rho_{J}^{3}}\Big|\geq\frac{1}{20},
\end{split}
\end{equation}
%
where $(E_{*},E_{**})$ be any connected subinterval of $\Gamma_{j+1}^{(J)},\ 0\leq j\leq J-1.$
\end{lemma}
\begin{proof}
We prove the estimates in \eqref{202310020} by induction.

$\mathbf{Case\ one}$: $J=1.$ In this case the set we consider is $\Gamma_{1}^{(1)}.$
For any $E\in(E_{*},E_{**})\subset\Gamma_{1}^{(1)},$ by the definition of $\Gamma_{1}^{(1)}$ we know that there exists $k_{0}\in\ZZ^{d}$ with $0<|k_{0}|\leq N_{0}$ such that equation \eqref{20231003+} is violated. Set
%
\begin{equation}\label{202310021}
\begin{split}
\text{either}\
\Big|\frac{d^{2}\sqrt{E}}{d\rho_{J}^{2}}\Big|\geq\frac{1}{20}\
\text{or}\ \Big|\frac{d^{3}\sqrt{E}}{d\rho_{J}^{3}}\Big|\geq\frac{1}{20},
\end{split}
\end{equation}
%

\end{proof}
\end{comment}

\subsection{The estimate of integral with general M}
With the estimates in Lemma~\ref{gamma0estimate}, by applying the Van der Corput lemma, we are ready to estimate $\mathcal{I}_{M}^{J}(\Sigma)$ with $M\in\RR.$
\begin{lemma}\label{auxiliarylemma3}
Assume that for some positive $J\geq0$ the function $h_{J}$ satisfies the estimates \eqref{202308114} and \eqref{202308115},
then for every $M\in\RR$ and $t\in\RR,$ one has
\begin{equation}\label{20230815+}
\begin{split}
|\mathcal{I}_{M}^{J}(\Sigma)|
\leq|\ln\varepsilon_{0}|^{2J^{2}d}7622\langle t\rangle^{-1/3}
+\varepsilon_{J}^{3\sigma/4}/2+2|M|\varepsilon_{J}^{1/4}.
\end{split}
\end{equation}
\end{lemma}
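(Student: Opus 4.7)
The plan is to isolate three sources of error matching the three terms on the right-hand side of \eqref{20230815+}: an oscillatory contribution from a ``good'' subset on which Van der Corput applies, a measure-theoretic contribution from a ``bad'' subset where it does not, and an amplitude error from replacing $\rho$ by $\rho_J$ in the cosine. Throughout I would use the change of variable $\rho' dE = d\rho$, so $\rho(\Sigma)\subset[0,\pi]$ controls the total length.

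The first step is to substitute $\cos(M\rho)$ by $\cos(M\rho_J)$. Using $|\cos(M\rho)-\cos(M\rho_J)|\leq|M||\rho-\rho_J|$, the bound $|\rho_J-\rho|\leq\varepsilon_J^{1/4}$ on $\cup_{j=0}^{J}\Gamma_j^{(J)}$ (from \eqref{202308140} and \eqref{202308142}), and $|h_J|_\infty\leq 16/15$ (from \eqref{202308114}, together with the even smaller bound \eqref{202308115} on higher-index pieces), this substitution contributes at most $2|M|\varepsilon_J^{1/4}$, which is the last term of \eqref{20230815+}. Second, I would extract the ``bad'' set $B:=\{E\in\cup_j\Gamma_j^{(J)}:|\sin\xi_J|<3\varepsilon_J^{1/20}/2\}$. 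By \eqref{202308144} its $\rho$-measure is at most $\varepsilon_J^{1/24}$, so its contribution is bounded by $(16/15)\varepsilon_J^{1/24}\leq\varepsilon_J^{3\sigma/4}/2$ once $\varepsilon_0$ is small, since $1/24>3\sigma/4=3/800$.

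Third, on the ``good'' complement $G$, Theorem~\ref{mainresult}(4) guarantees at most $|\ln\varepsilon_0|^{2J^2 d}$ connected open components on which $\xi_J$ is smooth and $\rho_J$ is strictly monotone. On each component I change variable from $E$ to $\rho_J$, rewriting the corresponding piece as
\begin{equation*}
\int e^{it\phi(\rho_J)}\tilde h(\rho_J)\cos(M\rho_J)\,d\rho_J,\qquad \phi(\rho_J):=\sqrt{E(\rho_J)+3},\quad \tilde h:=h_J\,\rho'/\rho_J'.
\end{equation*}
Writing $\cos(M\rho_J)=\tfrac12(e^{iM\rho_J}+e^{-iM\rho_J})$ gives a total phase $t\phi(\rho_J)\pm M\rho_J$, and the linear correction leaves $\phi''$ and $\phi'''$ unchanged, so Lemma~\ref{gamma0estimate} yields either $|\phi''|\geq 1/200$ or $|\phi'''|\geq 1/200$ on each component. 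The Van der Corput lemma (with $k=2$ or $k=3$; the cubic case dominates) then furnishes an $O(\langle t\rangle^{-1/3})$ bound per component. Summing over the $\leq|\ln\varepsilon_0|^{2J^2 d}$ components and tracking constants yields the first term of \eqref{20230815+}.

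The main obstacle is the bookkeeping for $\tilde h$ after the change of variables. On $\Gamma_{j+1}^{(J)}$ the Jacobian $\rho'/\rho_J'$ can blow up like $|\sin\xi_0|^{-1}$ via \eqref{202309096}; this has to be absorbed into the smallness $|h_J|_{C^1}\leq 10\varepsilon_j^{3\sigma}$ from \eqref{202308115} to obtain the $L^\infty$ bound and total-variation control that Van der Corput requires, uniformly across $j=0,\dots,J$. Combining these uniform amplitude estimates with the $1/200$ lower bounds from Lemma~\ref{gamma0estimate} and the standard Van der Corput constants $c_2,c_3$ produces the explicit prefactor $7622$.
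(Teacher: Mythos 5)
Your three-term decomposition matches the shape of the target bound, and the use of Theorem~\ref{mainresult}(4), \eqref{202308144}, Lemma~\ref{gamma0estimate} and Van der Corput on each of the $\leq|\ln\varepsilon_0|^{2J^2d}$ components is the right skeleton. But the change-of-variables scheme you chose creates a genuine gap that the paper deliberately avoids, and you also omit two error terms that the paper must (and does) control.

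First, the Jacobian. You change variables $\rho'\,dE=d\rho$ and then again to $\rho_J$, producing the amplitude $\tilde h=h_J\cdot\rho'/\rho_J'$. You worry this ratio ``can blow up like $|\sin\xi_0|^{-1}$'', which is not the actual problem: by Lemma~\ref{resonantcase} both $\rho'$ and $\rho_J'$ are $O(\varepsilon_0^{1/4})$-close to $(2\sin\xi_0)^{-1}$, so the ratio stays near $1$. The real obstruction is regularity: Van der Corput with an amplitude (Lemma~\ref{VanderCorput2}) needs $\tilde h\in C^1$, i.e.\ total-variation control, but $\tilde h'$ involves $\rho''$, and $\rho$ is only known to be $C^1$ on $\widetilde\Sigma$ (from \eqref{202308131}--\eqref{202308132}), not $C^2$, and certainly not smooth on the full good components of $\Gamma_j^{(J)}$. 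The paper sidesteps this by replacing the whole product $\cos(M\rho)\cdot\rho'$ by $\cos(M\rho_J)\cdot\rho_J'$ in one error term (bounding $|\rho'-\rho_J'|\leq\varepsilon_J^{1/10}$ via \eqref{202308137}--\eqref{20230813+} on the set $|\sin\xi_J|\gtrsim\varepsilon_J^{1/20}$, and $|\cos M\rho-\cos M\rho_J|\leq 2|M|\varepsilon_J^{1/4}$ via \eqref{202308146}); after that the integrand is $h_J\,e^{it\sqrt{E+3}}\cos(M\rho_J)\,\rho_J'$, and the change of variables to $\rho_J$ gives amplitude exactly $h_J\circ E$, which is $C^3$ by hypothesis. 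Your route does not reach an amplitude you can control.

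Second, two error sets are missing from your $\varepsilon_J^{3\sigma/4}/2$ budget. Your ``bad'' set is only $\{|\sin\xi_J|<3\varepsilon_J^{1/20}/2\}$, but $\mathcal{I}_M^J(\Sigma)$ integrates over all of $\widetilde\Sigma=\cup_{j\geq0}\Sigma_j$, and the pieces $\Sigma_j$ with $j>J$ are \emph{not} in your good-set framework even where $|\sin\xi_J|$ is large, because the hypotheses \eqref{202308113}--\eqref{202308115} and the estimates \eqref{202308137}--\eqref{202308142} are stated on $\Gamma_j^{(J)}$, $0\leq j\leq J$. The paper peels them off by the measure bound $|\rho(\Sigma_{j+1})|\leq3|\ln\varepsilon_j|^{2d}\varepsilon_j^\sigma$, contributing $\leq\varepsilon_J^{3\sigma/4}/8$ (term \eqref{202308154}). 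Similarly, $\mathcal{R}_M^J$ in the paper integrates over $\Gamma_j^{(J)}$, while $\mathcal{I}_M^J$ lives on $\Sigma$, and the discrepancy $\Gamma_j^{(J)}\setminus\Sigma_j$ is controlled by \eqref{202308145} (term \eqref{202308151}). Without accounting for these two pieces your $\varepsilon_J^{3\sigma/4}/2$ term is not actually established.
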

\begin{proof}
 Set
\begin{equation}\label{202308150}
\begin{split}
\mathcal{R}_{M}^{J}:=\sum_{j=0}^{J}\int_{\{\Gamma_{j}^{(J)}
:|\sin\xi|>\varepsilon_{J}^{1/20}\}}h_{J}\me^{\mi t\sqrt{E+3}}
\cos M\rho_{J}\cdot\rho_{J}'dE
\end{split}
\end{equation}
as the approximation of $\mathcal{I}_{M}^{J}(\Sigma)$ defined by \eqref{202308117}. Thus,
\begin{equation}\label{202308151}
\begin{split}
\mathcal{I}_{M}^{J}(\Sigma)&-\mathcal{R}_{M}^{J}\\
&=-\int_{\cup_{0\leq j\leq J}\{\Gamma_{j}^{(J)}\setminus\Sigma_{j}
:|\sin\xi|>\varepsilon_{J}^{1/20}\}}h_{J}\me^{\mi t\sqrt{E+3}}
\cos M\rho_{J}\cdot\rho_{J}'dE
\end{split}
\end{equation}
\begin{equation}\label{202308152}
\begin{split}
+\int_{\cup_{0\leq j\leq J}\{\Sigma_{j}
:|\sin\xi|>\varepsilon_{J}^{1/20}\}}h_{J}\me^{\mi t\sqrt{E+3}}
(\cos M\rho\cdot\rho'-\cos M\rho_{J}\cdot\rho_{J}')dE
\end{split}
\end{equation}
\begin{equation}\label{202308153}
\begin{split}
+\int_{\cup_{0\leq j\leq J}\{\Sigma_{j}
:|\sin\xi|\leq\varepsilon_{J}^{1/20}\}}h_{J}\me^{\mi t\sqrt{E+3}}
\cos M\rho\cdot\rho'dE
\end{split}
\end{equation}
\begin{equation}\label{202308154}
\begin{split}
+\int_{\cup_{j\geq J+1}\Sigma_{j}}h_{J}\me^{\mi t\sqrt{E+3}}
\cos M\rho\cdot\rho'dE.
\end{split}
\end{equation}

The estimate $|\rho(\Sigma_{j+1})|\leq 3|\ln\varepsilon_{j}|^{2d}\varepsilon_{j}^{\sigma}$ ensures that the term in \eqref{202308154} is bounded by
\begin{equation*}
\begin{split}
\sum_{j=J+1}^{\infty}48|\ln\varepsilon_{j}|^{2d}\varepsilon_{j}^{\sigma}/5\leq
\varepsilon_{J}^{3\sigma/4}/8.
\end{split}
\end{equation*}

To consider the term in \eqref{202308153}, we fix our attention on the $E\in\Sigma_{j}, 0\leq j\leq J$ such that $|\sin\xi|\leq\varepsilon_{J}^{1/20}.$
The first inequality in \eqref{202308140} implies
\begin{equation*}
\begin{split}
|\sin\xi_{J}|\leq3\varepsilon_{J}^{1/4}/2,
\end{split}
\end{equation*}
which, together with \eqref{202308144} and \eqref{202308114}, implies that this term is bounded by
\begin{equation*}
\begin{split}
16\varepsilon_{J}^{1/24}/15\leq\varepsilon_{J}^{5\sigma}.
\end{split}
\end{equation*}

As for the term in \eqref{202308151}, the fact given in \eqref{202308145}, together with \eqref{202308114}, implies this term is bounded by
\begin{equation*}
\begin{split}
16(J+1)\varepsilon_{J}^{7\sigma/8}/15\leq\varepsilon_{J}^{3\sigma/4}/8.
\end{split}
\end{equation*}

Now, we consider the term defined by \eqref{202308152}.
On $\cup_{0\leq j\leq J}\{\Sigma_{j}: |\sin\xi|\geq\varepsilon_{J}^{1/20}\},$
by the first inequality in \eqref{202308140} one has
\begin{equation*}
\begin{split}
|\sin\xi_{J}|\geq\varepsilon_{J}^{1/20}/2.
\end{split}
\end{equation*}
%
\begin{comment}
Moreover, note $\rho=\lim_{j\rightarrow\infty}\rho_{j},\ \forall E\in\widetilde{\Sigma},$ then
%
\begin{equation*}
\begin{split}
\rho_{J}-\rho=-\sum_{j=J}^{\infty}(\rho_{j+1}-\rho_{j}),
\end{split}
\end{equation*}
%
which, together with \eqref{202310041}, yields
%
\begin{equation*}
\begin{split}
|\rho'-\rho'_{J}|&\leq
\sum_{j=J}^{\infty}|(\rho_{j+1}-\rho_{j})'|
=\sum_{j=J}^{\infty}\frac{\partial(\rho_{j+1}-\rho_{j})}{\partial\xi_{0}}
\frac{\partial\xi_{0}}{\partial E}\\
&=\frac{1}{2\sin\xi_{0}}\sum_{j=J}^{\infty}\partial_{\xi_{j}}
\widetilde{\rho}_{j+1}(\xi_{j})\partial_{\xi_{0}}\xi_{j}\\
&\leq\frac{2\varepsilon_{J}^{1/4}}{\sin\xi_{0}}
<\varepsilon_{J}^{1/10}.
\end{split}
\end{equation*}
%
\end{comment}
The inequality above, together with the inequalities in \eqref{202308137}-\eqref{20230813+}, yields
\begin{equation*}
\begin{split}
|\rho'-\rho'_{J}|&=\frac{1}{2}\big|\sin^{-1}\xi(tr B)'-\sin^{-1}\xi_{J}(tr A_{J})'\big|\\
&\leq\frac{|(tr A_{J})'||\sin\xi-\sin\xi_{J}|+|\sin\xi_{J}||(tr B)'-(tr A_{J})'|}
{2\sin\xi_{J}\sin\xi}\\
&\leq2\varepsilon_{J}^{-10}(2\varepsilon_{J}^{1/4}N_{J}^{10\tau}
+2\varepsilon_{J}^{1/4})\leq\varepsilon_{J}^{1/10}.
\end{split}
\end{equation*}
Moreover, with the estimate in \eqref{202308146} we also have
\begin{equation*}
\begin{split}
|\cos M\rho_{J}-\cos M\rho|&=2|\sin(2^{-1}M(\rho_{J}+\rho))\sin(2^{-1}M(\rho_{J}-\rho))|\\
&\leq 2|M||\rho_{J}-\rho|\leq2|M|\varepsilon_{J}^{1/4}.
\end{split}
\end{equation*}
The two inequalities above imply that the term defined by \eqref{202308152} is bounded by $2|M|\varepsilon_{J}^{1/4}.$

The discussions above yield
\begin{equation}\label{202401280}
\begin{split}
|\mathcal{I}_{M}^{J}(\Sigma)-\mathcal{R}_{M}^{J}|\leq
2^{-1}\varepsilon_{J}^{3\sigma/4}+2|M|\varepsilon_{J}^{1/4}.
\end{split}
\end{equation}

Now, we give the estimate of the term $\mathcal{R}_{M}^{J}$ defined by \eqref{202308150}. The first inequality in \eqref{202308140} implies that
\begin{equation*}
\begin{split}
\bigcup_{j=0}^{J}\big\{E\in\Gamma_{j}^{(J)}:\ |\sin\xi|\geq\varepsilon_{J}^{1/20}\big\}\subset
\bigcup_{j=0}^{J}\big\{E\in\Gamma_{j}^{(J)}:\ |\sin\xi_{J}|\geq3\varepsilon_{J}^{1/20}/2\big\},
\end{split}
\end{equation*}
which yields
\begin{equation}\label{202308155}
\begin{split}
|\mathcal{R}_{M}^{J}|\leq|\mathcal{W}_{M}^{J}|,
\end{split}
\end{equation}
where
\begin{equation}\label{202308156}
\begin{split}
\mathcal{W}_{M}^{J}(\Sigma):=
\sum_{j=0}^{J}\int_{\{\Gamma_{j}^{(J)}
:|\sin\xi_{J}|>3\varepsilon_{J}^{1/20}/2\}}h_{J}\me^{\mi t\sqrt{E+3}}
\cos M\rho_{J}\cdot\rho_{J}'dE.
\end{split}
\end{equation}

Now, we fix our attention on the set
\begin{equation*}
\begin{split}
\bigcup_{j=0}^{J}\big\{E\in\Gamma_{j}^{(J)}:\ |\sin\xi_{J}|\geq3\varepsilon_{J}^{1/20}/2\big\},
\end{split}
\end{equation*}
which has at most $|\ln\varepsilon_{0}|^{2J^{2}d}$
connected components. Let $(E_{*},E_{**})$ be one of these components, on which $\rho_{J}(E)$ is strictly increasing. Thus, $E(\rho_{J})$ is well-defined.

Note $\rho_{J}'=\xi_{J}'>3^{-1},$ then $|\frac{dE}{d\rho_{J}}|\leq 3.$
Thus, if we set
\begin{equation*}
\begin{split}
F(\rho_{J})=(h_{J}\circ E)(\rho_{J}),
\end{split}
\end{equation*}
then, by the estimates in \eqref{202308114} and \eqref{202308115} we have
\begin{equation*}
\begin{split}
|F|_{C^{1}(\rho_{J}(\Gamma_{0}^{(J)}))}&\leq16/15,\\
|F|_{C^{1}(\rho_{J}(\Gamma_{j+1}^{(J)}))}&\leq10\varepsilon_{j}^{3\sigma},\ 0\leq j\leq J-1, J\geq1.
\end{split}
\end{equation*}
By making the change of variables $E=E(\rho_{J}),$ we have
\begin{equation}\label{202308159+}
\begin{split}
\int_{E_{*}}^{E_{**}}&h_{J}\me^{\mi t\sqrt{E+3}}
\cos M\rho_{J}\cdot\rho_{J}'dE\\
&=\int_{\rho_{J}(E_{*})}^{\rho_{J}(E_{**})}
F(\rho_{J})\me^{\mi t\sqrt{E+3}}
\cos M\rho_{J}d\rho_{J}\\
&=2^{-1}\int_{\rho_{J}(E_{*})}^{\rho_{J}(E_{**})}
F(\rho_{J})\big\{\me^{\mi t(\sqrt{E+3}+t^{-1}M\rho_{J})}
+\me^{\mi t(\sqrt{E+3}-t^{-1}M\rho_{J})}\big\}d\rho_{J}.
\end{split}
\end{equation}

Let $\mathcal{J}\subset (E_{*},E_{**})$ be the subset such that
\begin{equation*}
\begin{split}
-1\leq\cos\xi_{0}\leq\frac{1}{3} \ \text{or}\ \frac{3}{5}\leq\cos\xi_{0}\leq1 \ \text{on} \ \mathcal{J},
\end{split}
\end{equation*}
and
\begin{equation*}
\begin{split}
\frac{1}{3}<\cos\xi_{0}<\frac{3}{5}\  \text{on} \ \ (E_{*},E_{**}) \setminus \ \mathcal{J}.
\end{split}
\end{equation*}
%
\begin{comment}
%
\begin{equation*}
\begin{split}
\big|\frac{\partial^{2}\sqrt{E+3}}{\partial\rho_{J}^{2}}\big|\geq\frac{1}{50} \ \ \text{on} \ \mathcal{J},\ \big|\frac{\partial^{2}\sqrt{E+3}}{\partial\rho_{J}^{2}}\big|<\frac{1}{50}\  \text{on} \ \ (E_{*},E_{**}) \setminus \ \mathcal{J}.
\end{split}
\end{equation*}
%
The equality in
\eqref{202309220} implies
%
\begin{equation*}
\begin{split}
\big|\frac{\partial^{2}\sqrt{E+3}}{\partial\rho_{J}^{2}}-
\frac{2^{-1/2}}
{(1+O_{1}(\varepsilon_{0}^{1/4}))^{2}}
\frac{(1-\cos\xi_{0})^{1/2}}{2}\big|
\leq O_{1}(\varepsilon_{0}^{1/4}),
\end{split}
\end{equation*}
%
\end{comment}
Then, by the fact $\rho_{J}\in(-\varepsilon_{J}^{1/4},\pi+\varepsilon_{J}^{1/4}),$ we know that
$\mathcal{J}\subset (E_{*},E_{**}),$ and $(E_{*},E_{**})\setminus \mathcal{J},$ are composed at most by two subinterval intervals (maybe empty), which denote as $\mathcal{J}_{1},\mathcal{J}_{2}$ and $\mathcal{S}_{1},\mathcal{S}_{2},$ respectively.

On $\mathcal{J}_{j},j=1,2,$ \eqref{202308159} implies
$\Big|\frac{d^{2}\sqrt{E+3}}{d\rho_{J}^{2}}\Big|\geq 1/200.$ Then, by applying Van der Corput lemma (Lemma~\ref{VanderCorput2}) with $k=2,$ we get
(for $|t|\geq1$)
\begin{equation*}
\begin{split}
\int_{\rho_{J}(\mathcal{J}_{j})}F(\rho_{J})&\me^{\mi t[\sqrt{E+3}\pm t^{-1}M\rho_{J}]}d\rho_{J}\\
&\leq 18|(200)^{-1}t|^{-1/2} 16(1+\pi) /15<2722|t|^{-1/2}.
\end{split}
\end{equation*}

On $\mathcal{S}_{j},j=1,2,$ \eqref{+202308159} implies
$\Big|\frac{d^{3}\sqrt{E+3}}{d\rho_{J}^{3}}\Big|\geq 1/200.$
By applying Van der Corput lemma (Lemma~\ref{VanderCorput2}) with $k=3,$ we get
(for $|t|\geq1$)
\begin{equation*}
\begin{split}
\int_{\rho_{J}(\mathcal{S}_{j})}F(\rho_{J})&\me^{\mi t[\sqrt{E+3}\pm t^{-1}M\rho_{J}]}d\rho_{J}\\
&\leq 18|(200)^{-1}t|^{-1/3} 16(1+\pi) /15<1089|t|^{-1/3}.
\end{split}
\end{equation*}

The two inequalities above imply that for every connected component $(E_{*},E_{**}),$ we have
\begin{equation*}
\begin{split}
\int_{\rho_{J}((E_{*},E_{**}))}F(\rho_{J})\me^{\mi t[\sqrt{E+3}\pm t^{-1}M\rho_{J}]}d\rho_{J}<7622|t|^{-1/3}.
\end{split}
\end{equation*}
Recalling that there are $|\ln\varepsilon_{0}|^{2J^{2}d}$ connected components
in $\cup_{j=0}^{J}\Gamma_{j}^{(J)},$ then the inequality above, together with
\eqref{202308159+}, and \eqref{202308156}, yield
\begin{equation*}
\begin{split}
\Big|\mathcal{W}_{M}^{J}(\Sigma)\Big|<|\ln\varepsilon_{0}|^{2J^{2}d}
7622\langle t\rangle^{-1/3},
\end{split}
\end{equation*}
which, together with \eqref{202401280} and \eqref{202308155}, yields \eqref{20230815+} for $t>1.$ Obviously, the estimate in \eqref{20230815+} holds trivially for $|t|\leq1.$
\end{proof}

\subsection{Proof of Lemma~3.1.} Fix $t\in\RR^{+},$ and set $J$ large enough such that
\begin{equation}\label{202309222}
\begin{split}
\varepsilon_{J}^{3\sigma/4}\leq\langle t\rangle^{-1/3}.
\end{split}
\end{equation}
Thus,
\begin{equation}\label{202309222+}
\begin{split}
J\geq J_{*}:=\Big[\frac{1}{\ln(1+\sigma)}\ln\big(\frac{4\ln\langle t\rangle}{9\sigma|\ln\varepsilon_{0}|}\big)\Big]+1\leq201\ln\ln(2+\langle t\rangle).
\end{split}
\end{equation}

$\mathbf{Case\ one}: $ $|M|\geq\frac{32}{5}\langle t\rangle^{\frac{4}{3}}.$ We apply the inequality \eqref{202308119}. The first and second terms at $r.h.s.$
of \eqref{202308119} are bounded by $\frac{1}{3}|\ln\varepsilon_{0}|^{2J_{*}^{2}d}\langle t\rangle^{\frac{-4}{3}}$ (take $J=J_{*}$) and $\frac{5}{3}\langle t\rangle^{\frac{-1}{3}},$ respectively.
Summing up and by applying the inequality in \eqref{202309222+} we get the result.

$\mathbf{Case\ two}: $ $|M|<\frac{32}{5}\langle t\rangle^{\frac{4}{3}}.$
We apply the inequality \eqref{20230815+}. Take $J=J_{*},$ then the sum of the first two terms at $r.h.s.$ of \eqref{20230815+} are estimated by $7623|\ln\varepsilon_{0}|^{2J_{*}^{2}d}\langle t\rangle^{\frac{-1}{3}}.$ For the third one just remark that
\begin{equation*}
\begin{split}
2|M|\varepsilon_{J}^{1/4}\leq\frac{64\varepsilon_{J}^{1/8}}{5}\langle t\rangle^{\frac{4}{3}}
(\varepsilon_{J}^{3\sigma/4})^{(6\sigma)^{-1}}<
\frac{1}{3}\langle t\rangle^{\frac{-100}{9}+\frac{4}{3}}<
\frac{1}{3}\langle t\rangle^{-9},
\end{split}
\end{equation*}
where the second inequality is by the inequality in \eqref{202309222} and $\sigma=1/200.$ Summing up and by applying the inequality in \eqref{202309222+} we get the result.
\section{Proof of main results}

\subsection{Proof of Theorem~\ref{theorem1}}
Let $q(t)=\me^{\mi t(H_{\theta}+3)^{1/2}}\phi,$ where $\theta\in\TT^{d}, \phi\in\ell^{1}(\ZZ),$ which solves the first dynamical equation \eqref{mmainequationss}. Set
\begin{equation}\label{202309234+}
\begin{split}
G(E,t):=S(q(t))=\left(
\begin{matrix}
g_{1}(E,t)\\
\\
g_{2}(E,t)
\end{matrix}
\right).
\end{split}
\end{equation}
Thus, for $a.e.\  E\in\Sigma,$ we have
\begin{equation}\label{202309225}
\begin{split}
\left(
\begin{matrix}
g_{1}(E,t)\\
\\
g_{2}(E,t)
\end{matrix}
\right)=\me^{\mi t\sqrt{E+3}}\left(
\begin{matrix}
g_{1}(E,0)\\
\\
g_{2}(E,0)
\end{matrix}
\right).
\end{split}
\end{equation}

The estimate in \eqref{202308121} implies, for all $n\in\ZZ,$
\begin{equation}\label{202309226}
\begin{split}
|q_{n}(t)|\leq|\frac{1}{\pi}\int_{\Sigma}(g_{1}(E)\mathcal{K}_{n}(E)
+g_{2}(E)\mathcal{J}_{n}(E))\rho'dE|
+\varepsilon_{0}^{\sigma^{2}/10}\|q\|_{\ell^{\infty}}.
\end{split}
\end{equation}
Traditional calculations shows
\begin{equation}\label{202309227}
\begin{split}
\int_{\Sigma}&(g_{1}(E,t)\mathcal{K}_{n}(E)
+g_{2}(E,t)\mathcal{J}_{n}(E))\rho'dE\\
&=\int_{\Sigma}\me^{\mi t\sqrt{E+3}}(g_{1}(E,0)\mathcal{K}_{n}(E)
+g_{2}(E,0)\mathcal{J}_{n}(E))\rho'dE\\
&=\int_{\Sigma}\me^{\mi t\sqrt{E+3}}\sum_{m\in\ZZ}\phi_{m}(\sum_{m_{*},n*}
\beta_{m,m_{*}}\beta_{n,n_{*}}\cos(m_{*}-n_{*})\rho)\rho'dE.
\end{split}
\end{equation}

\begin{lemma}\label{lemma4.1}
Assume that $|P|_{r}=\varepsilon_{0}\leq\varepsilon_{*}$ with $\varepsilon_{*}$ in Theorem~\ref{theorem1}. For any $m,m_{*},n,n_{*}$ and $\forall t\in\RR,$
\begin{equation*}
\begin{split}
\Big|\int_{\Sigma}\me^{\mi t\sqrt{E+3}}(
\beta_{m,m_{*}}\beta_{n,n_{*}}\cos(m_{*}-n_{*})\rho)\rho'dE\Big|
\leq7624\langle t\rangle^{-1/3}|\ln\varepsilon_{0}|^{a(\ln\ln(2+\langle t\rangle))^{2}d}.
\end{split}
\end{equation*}
\end{lemma}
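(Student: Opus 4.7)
\textbf{Proof plan for Lemma~\ref{lemma4.1}.} The strategy is to identify the integrand with the abstract setup of Lemma~\ref{mainlemma} and then invoke that lemma directly. Set $M := m_*-n_*$ and define the function on $\widetilde{\Sigma}$ by
\begin{equation*}
h(E) := \beta_{m,m_*}(E)\,\beta_{n,n_*}(E),
\end{equation*}
and its approximants by $h_J := \beta_{m,m_*}^{J}\,\beta_{n,n_*}^{J}$ on $\cup_{0\le j\le J}\Gamma_j^{(J)}$, where the $\beta$'s and $\beta^J$'s are those furnished by Proposition~\ref{proposition}. The integral in the lemma statement is then exactly $\mathcal{I}_M(\Sigma)$ as defined in \eqref{202308116}, so once the three hypotheses \eqref{202308113}--\eqref{202308115} of Lemma~\ref{mainlemma} are checked for $(h,h_J)$, the bound \eqref{202308115+} gives the desired conclusion.

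The verification of the hypotheses is a routine matter of combining the estimates from Proposition~\ref{proposition} with product-rule bounds. For \eqref{202308113}, I would write
\begin{equation*}
h_J-h = (\beta_{m,m_*}^{J}-\beta_{m,m_*})\beta_{n,n_*}^{J} + \beta_{m,m_*}(\beta_{n,n_*}^{J}-\beta_{n,n_*}),
\end{equation*}
and use \eqref{202308126} together with the uniform boundedness of $\beta_{m,m_*}$ and $\beta^J_{n,n_*}$ coming from \eqref{202308123}--\eqref{202308125}, which gives $|h_J-h|_{\Sigma_j}\le 10\varepsilon_J^{1/4}$ after harmlessly adjusting constants. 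For \eqref{202308114}, the estimate \eqref{202308124} yields $|\beta_{n,n_*}^{J}-\delta_{n,n_*}|_{C^2(\Gamma_0^{(J)})}\le\varepsilon_0^{1/4}$, so each factor is $1+O(\varepsilon_0^{1/4})$ in $C^1$ when $n=n_*$ (and $O(\varepsilon_0^{1/4})$ otherwise); the product rule then gives $|h_J|_{C^1(\Gamma_0^{(J)})}\le (1+\varepsilon_0^{1/4})^2\le 16/15$ provided $\varepsilon_0\le\varepsilon_*$ is small enough. For \eqref{202308115}, the bound \eqref{202308125} gives $|\beta_{\cdot,\cdot}^{J}|_{C^1(\Gamma_{j+1}^{(J)})}\le\varepsilon_j^{3\sigma}$, so the product rule yields $|h_J|_{C^1(\Gamma_{j+1}^{(J)})}\le 2\varepsilon_j^{6\sigma}\le 10\varepsilon_j^{3\sigma}$.

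With the three hypotheses verified, Lemma~\ref{mainlemma} applies with the above $h$, $h_J$, and $M=m_*-n_*$, and outputs precisely the claimed bound
\begin{equation*}
|\mathcal{I}_M(\Sigma)|\le 7624\langle t\rangle^{-1/3}|\ln\varepsilon_0|^{a(\ln\ln(2+\langle t\rangle))^2 d}.
\end{equation*}
I do not anticipate any genuine obstacle: the substantive work has been packaged into the two preceding results (the quantitative KAM reducibility of Theorem~\ref{mainresult} and the spectral transform of Proposition~\ref{proposition}, which together feed into Lemma~\ref{mainlemma}). The only point that requires a moment's care is the bookkeeping of absolute constants when passing from the pair of single-index bounds on $\beta^J$ to bounds on the product $h_J$, in particular ensuring that the constant $16/15$ in \eqref{202308114} is not exceeded; this is handled by shrinking $\varepsilon_*$ if necessary. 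Since the statement is uniform in $m,n,m_*,n_*$, no further index-dependent estimates are needed and the conclusion follows.
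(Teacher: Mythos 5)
Your proposal is correct and follows exactly the paper's route: the paper's own proof of Lemma~\ref{lemma4.1} is the one-liner ``Apply Lemma~\ref{mainlemma} by setting $h=\beta_{m,m_*}\beta_{n,n_*}$, $M=m_*-n_*$ and $h_J=\beta^J_{m,m_*}\beta^J_{n,n_*}$; the result immediately follows.'' You have simply spelled out the routine verification of hypotheses \eqref{202308113}--\eqref{202308115} via the product rule and Proposition~\ref{proposition}, which the paper leaves implicit.
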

\begin{proof}
Apply Lemma~\ref{mainlemma} by setting $h=\beta_{m,m_{*}}\beta_{n,n_{*}}, M=m_{*}-n_{*}$ and $h_{J}=\beta_{m,m_{*}}^{J}\beta_{n,n_{*}}^{J}.$ The result immediately follows.
\end{proof}

$\mathbf{End \ of \ the \ proof \ Theorem~\ref{theorem1}}.$ According to \eqref{202309227} and Lemma~\ref{lemma4.1}, we get, for every $n\in\ZZ,$
\begin{equation*}
\begin{split}
\Big|\int_{\Sigma}(g_{1}(E,t)\mathcal{K}_{n}(E)
+g_{2}(E,t)\mathcal{K}_{n}(E))\rho'dE\Big|
\leq\frac{68616|\ln\varepsilon_{0}|^{a(\ln\ln(2+\langle t\rangle))^{2}d}}{\langle t\rangle^{1/3}}|q(0)|_{\ell^{1}},
\end{split}
\end{equation*}
which, together with \eqref{202309226}, yields
\begin{equation*}
\begin{split}
|q(t)|_{\ell^{\infty}}&\leq\frac{68616}{\pi(1-\varepsilon_{0}^{\sigma^{2}/10})}
\langle t\rangle^{-1/3}|\ln\varepsilon_{0}|^{a(\ln\ln(2+\langle t\rangle))^{2}d}|q(0)|_{\ell^{1}}\\
&\leq22872\langle t\rangle^{-1/3}|\ln\varepsilon_{0}|^{a(\ln\ln(2+\langle t\rangle))^{2}d}|q(0)|_{\ell^{1}},
\end{split}
\end{equation*}
or
\begin{equation}\label{2202309229}
\begin{split}
|q(t)|_{\ell^{\infty}}=|\me^{\mi(H_{\theta}+3)^{1/2}t}q(0)|_{\ell^{\infty}}
\leq22872\langle t\rangle^{-1/3}
|\ln\varepsilon_{0}|^{a(\ln\ln(2+\langle t\rangle))^{2}d}|q(0)|_{\ell^{1}}.
\end{split}
\end{equation}

Note that the general solution of \eqref{mainequation+} is
\begin{equation*}
\begin{split}
u(t)=\cos(t((H_{\theta}+3)^{1/2}) u(0)
+(H_{\theta}+3)^{-1/2}\sin(t(H_{\theta}+3)^{1/2}) \partial_{t}u(0)
\end{split}
\end{equation*}
where $u(0),u_{t}(0)\in\ell^{1}.$ Then, by the estimate in \eqref{2202309229} and the fact that
\begin{equation}\label{202402065}
\begin{split}
\|(H_{\theta}+3)^{-1/2}\|_{\mathcal{L}(\ell^{\infty},\ell^{\infty})}\leq2,
\end{split}
\end{equation}
we get the dispersive estimate \eqref{mainestimate} in Theorem~\ref{theorem1}.

As for the energy estimate, by summation by part we have
\begin{equation*}
\begin{split}
\sum_{n\in\ZZ}(\Delta u_{n} &-V(\theta+n\omega)u_{n})\partial_{t} u_{n}=
\frac{-1}{2}\partial_{t}\sum_{n\in\ZZ}\big\{(u_{n+1}-u_{n})^{2}
+V(\theta+n\omega)u_{n}^{2}\big\}.
\end{split}
\end{equation*}
Thus, by multiplying $\partial_{t}u_{n}(t)$ both
side of \eqref{mainequation+} and sum in $n$ we have
\begin{equation*}
\begin{split}
\partial_{t}\sum_{n\in\ZZ}\big\{(\partial_{t}u_{n})^{2}+(u_{n+1}-u_{n})^{2}+V(\theta+n\omega)u_{n}^{2}\big\}=0,
\end{split}
\end{equation*}
which, together with the inequalities below
\begin{equation*}
\begin{split}
1-\varepsilon_{0}\leq|V(\theta)|=|1+P(\theta)|\leq1+\varepsilon_{0},\forall \theta\in\TT^{d}
\end{split}
\end{equation*}
implies
\begin{equation*}
\begin{split}
\sum_{n\in\ZZ}u_{n}^{2}&\leq\sum_{n\in\ZZ}V^{-1}(\theta+n\omega)
\big\{(\partial_{t}u_{n})^{2}+(u_{n+1}-u_{n})^{2}+V(\theta+n\omega)u_{n}^{2}\big\}\\
&<(1+2\varepsilon_{0})\sum_{n\in\ZZ}
\big\{(\partial_{t}u_{n}(0))^{2}+(u_{n+1}(0)-u_{n}(0))^{2}+(1+2\varepsilon_{0})u_{n}(0)^{2}\big\}\\
&<(1+2\varepsilon_{0})\sum_{n\in\ZZ}
\big\{(\partial_{t}u_{n}(0))^{2}+(5+2\varepsilon_{0})u_{n}(0)^{2}\big\},
\end{split}
\end{equation*}
thus, the energy estimate holds.
\subsection{Proof of Theorem~\ref{theorem2}}
Assume that $\kappa>5$ and $|P|_{r}=\varepsilon_{0}\leq\varepsilon_{*}$ where $\varepsilon_{*}$ is the one in
Theorem~\ref{theorem1}. Then for any $\zeta\in((\kappa-2)^{-1},1/3),$ the estimate in \eqref{2202309229} implies that there exists
$K(\varepsilon_{0},\zeta)$ such that
\begin{equation}\label{202401284}
\begin{split}
|\me^{\mi(H_{\theta}+3)^{1/2}t}\psi|_{\ell^{\infty}}\leq\langle t\rangle^{-\zeta}K|\psi|_{\ell^{1}}, t\in\RR,\psi\in\ell^{1}.
\end{split}
\end{equation}
Then, we cite the well known lemma below
\begin{lemma}\emph{[\cite{zhao20}(Lemma~5.1)]}\label{lemma5.1}
Let $0<\zeta\leq1$ and $\nu>1$ be fixed, then there existence $C_{1}>0$ such that
\begin{equation}\label{202309234}
\begin{split}
\int_{0}^{t}&\frac{1}{\langle t-s\rangle^{\zeta}}\frac{1}{\langle s\rangle^{\nu}}ds<\int_{0}^{\infty}\frac{1}{\langle t-s\rangle^{\zeta}}\frac{1}{\langle s\rangle^{\nu}}ds\leq \frac{C_{1}}{\langle t\rangle^{\zeta}}.
\end{split}
\end{equation}
\end{lemma}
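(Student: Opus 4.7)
The plan is to prove only the right-hand inequality, since the left one is the trivial enlargement of the integration domain for a positive integrand. Setting $I(t):=\int_{0}^{\infty}\langle t-s\rangle^{-\zeta}\langle s\rangle^{-\nu}\,ds$, one may restrict to $t\geq 1$: for $t$ bounded, $I(t)$ is continuous and uniformly bounded on compacts because $\nu>1$ already gives $\int_0^\infty\langle s\rangle^{-\nu}\,ds<\infty$, and the factor $\langle t-s\rangle^{-\zeta}\leq 1$ is harmless. For $t\geq 1$ I would split $[0,\infty)$ into three pieces based on the relative size of $s$ and $t$, namely $[0,t/2]$, $[t/2,2t]$, and $[2t,\infty)$, and handle each separately.

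On the two outer pieces the large factor is $\langle t-s\rangle$. For $s\in[0,t/2]$, one has $\langle t-s\rangle\geq\langle t/2\rangle\gtrsim\langle t\rangle$, so $\langle t-s\rangle^{-\zeta}\lesssim\langle t\rangle^{-\zeta}$ pulls out and leaves $\int_0^{t/2}\langle s\rangle^{-\nu}\,ds\leq\int_0^\infty\langle s\rangle^{-\nu}\,ds$, which is a finite constant because $\nu>1$. Symmetrically, for $s\in[2t,\infty)$ we have $\langle t-s\rangle=\langle s-t\rangle\geq\langle t\rangle$, so again $\langle t-s\rangle^{-\zeta}\lesssim\langle t\rangle^{-\zeta}$ and the remaining tail $\int_{2t}^\infty\langle s\rangle^{-\nu}\,ds$ is controlled by the same convergent integral. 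Each outer piece therefore contributes $\lesssim\langle t\rangle^{-\zeta}$.

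The middle region $[t/2,2t]$ is the essential one. Here $\langle s\rangle\geq\langle t/2\rangle\gtrsim\langle t\rangle$, so $\langle s\rangle^{-\nu}\lesssim\langle t\rangle^{-\nu}$ factors out. The remaining integral, after the substitution $u=t-s$, is bounded by $\int_{-t}^{t/2}\langle u\rangle^{-\zeta}\,du$, which evaluates to $\lesssim\langle t\rangle^{1-\zeta}$ when $\zeta<1$ and to $\lesssim\log\langle t\rangle$ when $\zeta=1$. Multiplying, the middle contribution is at most $\langle t\rangle^{1-\zeta-\nu}$ (resp.\ $\langle t\rangle^{-\nu}\log\langle t\rangle$), which is dominated by $\langle t\rangle^{-\zeta}$ because $\nu>1$. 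Summing the three pieces gives the claimed bound with an explicit constant $C_1$ depending only on $\zeta$ and $\nu$.

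The only delicate step is the endpoint $\zeta=1$, where the middle region produces a logarithm. It is absorbed by writing $\nu=1+\delta+(\nu-1-\delta)$ with $0<\delta<\nu-1$ and invoking $\log\langle t\rangle\leq C_\delta\langle t\rangle^\delta$, so $\langle t\rangle^{-\nu}\log\langle t\rangle\leq C_\delta\langle t\rangle^{-1-(\nu-1-\delta)}\leq C_\delta\langle t\rangle^{-1}$. This is the one place where the strictness of $\nu>1$ (as opposed to $\nu\geq 1$) is genuinely used; otherwise the argument is the standard three-region splitting for a convolution of weighted Japanese brackets.
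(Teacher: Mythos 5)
The paper does not prove this lemma at all — it is cited verbatim from \cite{zhao20} (Lemma~5.1), so there is no in-paper proof to compare against. Your proof is correct and is the standard three-region splitting argument $[0,t/2]\cup[t/2,2t]\cup[2t,\infty)$ for convolutions of Japanese brackets: on the outer pieces one pulls out $\langle t-s\rangle^{-\zeta}\lesssim\langle t\rangle^{-\zeta}$ and uses $\int_0^\infty\langle s\rangle^{-\nu}\,ds<\infty$; on the middle piece one pulls out $\langle s\rangle^{-\nu}\lesssim\langle t\rangle^{-\nu}$ and integrates the remaining $\langle\cdot\rangle^{-\zeta}$, getting $\langle t\rangle^{1-\zeta}$ (or a logarithm at $\zeta=1$) which is then absorbed because $\nu>1$.

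One small inaccuracy in your commentary: you single out the $\zeta=1$ endpoint as ``the one place where the strictness of $\nu>1$ is genuinely used,'' but in fact strictness is also needed on the two outer pieces — if $\nu=1$ then $\int_0^{t/2}\langle s\rangle^{-1}\,ds\sim\log t$ and the outer contribution would already fail to be $\lesssim\langle t\rangle^{-\zeta}$. This does not affect the validity of your proof, since the hypothesis $\nu>1$ is available throughout; it is only the attribution in the closing remark that is too narrow.
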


With the parameters $C_{1}$ and $K$ given above, we set
\begin{equation}\label{202402067}
\begin{split}
\delta_{*}=
2^{-1}(6C_{1}(4K)^{2\kappa-1})^{\frac{-1}{2\kappa}}.
\end{split}
\end{equation}
With the same trick proving the energy estimate in Theorem~\ref{theorem1} and the
hypothesis that $|u(0)|_{\ell^{1}},|\partial_{t}u(0)|_{\ell^{1}}:\leq \delta_{*}\ll1,$ we also get the energy estimate,
we omit the details.

Now, we consider the dispersive estimate. The Duhamel formula shows that the general solution of \eqref{mainequation} is given by
\begin{equation}\label{202310170}
\begin{split}
u(t,\psi,\phi)=T[u](t,\psi,\phi),
\end{split}
\end{equation}
implicitly, where
\begin{equation}\label{2024401300}
\begin{split}
T[u](t,\psi,\phi)&=-\int_{0}^{t}(H_{\theta}+3)^{-1/2}
\sin((t-s)(H_{\theta}+3)^{1/2})F(u)ds\\
&+\cos(t(H_{\theta}+3)^{1/2}) \psi
+(H_{\theta}+3)^{-1/2}\sin(t(H_{\theta}+3)^{1/2}) \phi
\end{split}
\end{equation}
with $u(0)=\psi=(\psi_{n})_{n\in\ZZ},
\partial_{t}u(0)=\phi=(\phi_{n})_{n\in\ZZ}\in\ell^{1}$ and $F(u)=(u_{n}^{2\kappa+1})_{n\in\ZZ}.$

For the nonlinear term $F(\psi)$ defined above, we have the estimates
\begin{equation}\label{202309232}
\begin{split}
|F(\psi)|_{\ell^{\infty}}&\leq |\psi|_{\ell^{\infty}}^{2\kappa+1},\\
|F(\psi)|_{\ell^{1}}&\leq |\psi|_{\ell^{2}}^{2}|\psi|_{\ell^{\infty}}^{2\kappa-1}.
\end{split}
\end{equation}

With the estimates above, we are ready to prove the dispersive estimate.
We will prove that the operator $T$ defined by \eqref{2024401300} is a contraction in an appropriate space and apply the contraction mapping theorem to show the existence of fixed point, which is the solution to \eqref{mainequation}. To this end, we define $\mathcal{L}$ by the space on which $T$ act:
\begin{equation*}
\begin{split}
\mathcal{L}_{\delta_{*}}=\Big\{&u:\RR^{+}\times \mathcal{B}_{\delta_{*}}\rightarrow \ell^{\infty}: u(t,\psi,\phi)\in\ell^{\infty},\forall
(t,\psi,\phi)\in\RR^{+}\times \mathcal{B}_{\delta_{*}}\\
&\ \ |u|_{\ell^{\infty}}^{(\zeta}\leq4K(|\psi|_{\ell^{1}}
+|\phi|_{\ell^{1}})
\Big\},
\end{split}
\end{equation*}
where
\begin{equation*}
\begin{split}
\mathcal{B}_{\delta_{*}}:=\Big\{(\psi,\phi)\in\ell^{1}\times\ell^{1}:
|\psi|_{\ell^{1}},|\phi|_{\ell^{1}}\leq\delta_{*}
\Big\},
\end{split}
\end{equation*}
and
\begin{equation}\label{202402061}
\begin{split}
|u|_{\ell^{\infty}}^{(\zeta} :=\sup_{n\in\ZZ}\sup_{\psi,\phi\in\mathcal{B}_{\delta_{*}},t\in\RR^{+}}
|u_{n}(t,\psi,\phi)|\langle t\rangle^{\zeta}.
\end{split}
\end{equation}
To prove that $T$ is a contraction, we apply the induced metric on $\mathcal{L}_{\delta_{*}}$
\begin{equation}\label{202402062}
\begin{split}
d(u,v)=|u-v|_{\ell^{\infty}}^{(\zeta}.
\end{split}
\end{equation}

With the definitions and estimates above, we are ready to prove the operator $T$ is a contraction on $\mathcal{L}_{\delta_{*}}.$

(Step 1) $T\mathcal{L}_{\delta_{*}}\subset\mathcal{L}_{\delta_{*}}.$
The estimates in \eqref{202402065}-\eqref{202309232} and the energy estimate imply that, for any $u\in\mathcal{L}_{\delta_{*}},$
\begin{equation*}
\begin{split}
\Big|\int_{0}^{t}&\sin((t-s)(H_{\theta}+3)^{1/2})F(u(s))ds\Big|_{\ell^{\infty}}\\
&\leq \int_{0}^{t}\Big|\sin((t-s)(H_{\theta}+3)^{1/2})F(u(s))\Big|_{\ell^{1}}ds\\
&\leq \int_{0}^{t}K\langle t-s\rangle^{-\zeta}|F(u(s))|_{\ell^{1}}ds\\
&\leq \int_{0}^{t}K\langle t-s\rangle^{-\zeta}|u(s)|_{\ell^{\infty}}^{2\kappa-1}
|u(s)|_{\ell^{2}}^{2}ds\\
&\leq 6(4K)^{2\kappa-1}K(|\psi|_{\ell^{1}}+|\phi|_{\ell^{1}})^{2\kappa+1}\int_{0}^{t}\langle t-s\rangle^{-\zeta}\langle s\rangle^{-\zeta(2\kappa-1)}ds\\
&\leq 6C_{1}(4K)^{2\kappa-1}K(|\psi|_{\ell^{1}}
+|\phi|_{\ell^{1}})^{2\kappa+1}\langle t\rangle^{-\zeta},
\end{split}
\end{equation*}
where we have used the fact that $\zeta(2\kappa-1)>1.$

The estimate above, together with \eqref{202402065} and \eqref{2024401300}, yields
\begin{equation*}
\begin{split}
|T[u]|_{\ell^{\infty}}^{(\zeta}&\leq 2K(|\psi|_{\ell^{1}}+|\phi|_{\ell^{1}})\{1+6C_{1}(4K)^{2\kappa-1}
(2\delta_{*})^{2\kappa}\}\\
&\leq4K(|\psi|_{\ell^{1}}+|\phi|_{\ell^{1}}),
\end{split}
\end{equation*}
where the last inequality is ensured by the choice of $\delta_{*}.$ Thus, $T\mathcal{L}_{\delta_{*}}\subset\mathcal{L}_{\delta_{*}}.$

(Step 2) $T$ is a contraction in $\mathcal{L}_{\delta_{*}}.$ For any $u,v\in\mathcal{L}_{\delta_{*}},$ we have
\begin{equation*}
\begin{split}
|T[u]-T[v]|_{\ell^{\infty}}
&\leq 2\int_{0}^{t}\Big|\sin((t-s)(H_{\theta}+3)^{1/2})(F(u(s))-F(v(s)))\Big|_{\ell^{\infty}}ds\\
&\leq 2\int_{0}^{t}K\langle t-s\rangle^{-\zeta}|(F(u)-F(v))|_{\ell^{\infty}}ds\\
&< 2K(16K\delta_{*})^{2\kappa}|u-v|_{\ell^{\infty}}^{(\zeta}\int_{0}^{t}\langle t-s\rangle^{-\zeta}\langle s\rangle^{-\zeta(2\kappa+1)}ds\\
&\leq 2C_{1}K(16K\delta_{*})^{2\kappa}|u-v|_{\ell^{\infty}}^{(\zeta}
\leq\frac{1}{2}|u-v|_{\ell^{\infty}}^{(\zeta},
\end{split}
\end{equation*}
where the third inequality is ensured by the following
\begin{equation*}
\begin{split}
\langle s\rangle^{(2\kappa+1)\zeta}&|u_{n}^{2\kappa+1}-v_{n}^{2\kappa+1}|\\
&\leq(2\kappa+1)\max\{(\langle s\rangle^{\zeta}|u_{n}|)^{2\kappa},(\langle s\rangle^{\zeta}|v_{n}|)^{2\kappa}\}
\langle s\rangle^{\zeta}|u_{n}-v_{v}|\\
&<(16K\delta_{*})^{2\kappa}|u-v|_{\ell^{\infty}}^{(\zeta}.
\end{split}
\end{equation*}
The inequality above yields
\begin{equation*}
\begin{split}
d(T[u],T[v])\leq\frac{1}{2}d(u,v).
\end{split}
\end{equation*}
That is the operator $T$ is a contraction in $d$-distance defined in \eqref{202402062}.

From the contraction mapping theorem we know that the operator $T$ possesses unique fixed point $u\in\mathcal{L}_{\delta_{*}},$ which is the solution of
\eqref{mainequation}.

\section*{Appendix}
\subsection{Appendix A. Van der Corput lemma}
For the convenience of readers, we give here the statement of the Van der Corput
lemma and its corollary which are used in this paper, even though they can be found in many textbooks on Harmonic Analysis (see, e.g., Chapter VIII of \cite{Stein93}).

\begin{lemma}\label{VanderCorput1}
Suppose that $\psi$ is real-valued and $C^{k}$ in $(a, b)$ for some $k\geq2$ and
\begin{equation}\label{202310070}
\begin{split}
|\psi^{(k)}(x)|\geq c,\ \forall x\in(a,b),
\end{split}
\end{equation}
For any $\lambda\in\setminus\{0\},$ we have
\begin{equation*}
\begin{split}
\Big|\int_{a}^{b}e^{i\lambda\psi^{(k)}(x)}dx\Big|\leq (2^{k-1}5-2)|c\lambda|^{-k^{-1}}.
\end{split}
\end{equation*}
\end{lemma}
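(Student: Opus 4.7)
The plan is induction on $k\geq 2$, following the classical Van der Corput argument. I suspect the statement contains a typographical slip: the oscillatory phase should read $e^{i\lambda\psi(x)}$ rather than $e^{i\lambda\psi^{(k)}(x)}$, since otherwise the hypothesis $|\psi^{(k)}|\geq c$ would be essentially unused. The base case $k=2$ exploits monotonicity of $\psi'$; the inductive step passes from exponent $-1/(k-1)$ to $-1/k$ via a split-and-optimize dichotomy.

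For the base case $k=2$, I would partition $(a,b)$ into $I_{1}=\{x:|\psi'(x)|\leq\delta\}$ and $I_{2}=\{x:|\psi'(x)|>\delta\}$. Since $|\psi''|\geq c$, $\psi'$ is strictly monotonic, hence $|I_{1}|\leq 2\delta/c$, and $I_{2}$ consists of at most two subintervals. On each such subinterval $(\alpha,\beta)$, I integrate by parts,
\[
\int_{\alpha}^{\beta}e^{i\lambda\psi}\,dx=\Big[\frac{e^{i\lambda\psi}}{i\lambda\psi'}\Big]_{\alpha}^{\beta}-\int_{\alpha}^{\beta}\frac{\psi''}{i\lambda(\psi')^{2}}e^{i\lambda\psi}\,dx,
\]
bound the boundary term by $2/(\lambda\delta)$, and use the telescoping identity $\int_{\alpha}^{\beta}|\psi''|/(\psi')^{2}\,dx=|1/\psi'(\alpha)-1/\psi'(\beta)|\leq 2/\delta$, which is valid because $\psi'$ is monotonic and bounded below by $\delta$ in modulus on $I_{2}$. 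This gives a per-component bound $4/(\lambda\delta)$, hence $8/(\lambda\delta)$ on $I_{2}$. Together with the trivial estimate $|I_{1}|\leq 2\delta/c$, the total is $2\delta/c+8/(\lambda\delta)$; choosing $\delta=2\sqrt{c/\lambda}$ balances both pieces and yields $8(c\lambda)^{-1/2}$, which agrees with $2^{k-1}\cdot 5-2=8$ at $k=2$.

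For the inductive step with $k\geq 3$, assume the bound holds at order $k-1$ with some constant $C_{k-1}$. The same splitting, now with $I_{1}=\{|\psi^{(k-1)}|\leq\delta\}$ and $I_{2}=\{|\psi^{(k-1)}|>\delta\}$, gives $|I_{1}|\leq 2\delta/c$ because $|\psi^{(k)}|\geq c$ forces $\psi^{(k-1)}$ to be monotonic, and again $I_{2}$ has at most two components. On each component, $|\psi^{(k-1)}|\geq\delta$, so the inductive hypothesis applied at order $k-1$ with $c$ replaced by $\delta$ yields a per-component bound $C_{k-1}(\delta\lambda)^{-1/(k-1)}$. The combined estimate $2\delta/c+2C_{k-1}(\delta\lambda)^{-1/(k-1)}$ is optimized at $\delta=c^{(k-1)/k}\lambda^{-1/k}$, which makes both contributions equal to $(c\lambda)^{-1/k}$ up to constants; this produces the recursion $C_{k}=2+2C_{k-1}$ with $C_{2}=8$, whose closed form solution is precisely $C_{k}=2^{k-1}\cdot 5-2$.

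The only delicate point is constant bookkeeping: matching the sharp formula $2^{k-1}\cdot 5-2$ requires one to (i) use monotonicity of $\psi^{(k-1)}$ to telescope the integration-by-parts remainder rather than estimating it crudely, (ii) note that $I_{2}$ has at most two components so that only a factor of $2$ multiplies $C_{k-1}$, and (iii) choose $\delta$ exactly at the balancing value $c^{(k-1)/k}\lambda^{-1/k}$ so no extraneous factor is introduced. No ingredient beyond the classical argument is needed; the lemma is included in the appendix purely for the convenience of the reader.
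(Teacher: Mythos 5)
The paper gives no proof of this lemma; it is quoted (with a typo, as you correctly note: $e^{i\lambda\psi^{(k)}(x)}$ should read $e^{i\lambda\psi(x)}$) from Chapter VIII of Stein's \emph{Harmonic Analysis}, which is precisely the argument you reproduce. Your constant bookkeeping is sound: the direct split-and-integrate-by-parts argument gives $C_2=8$, the inductive recursion $C_k=2C_{k-1}+2$ yields $C_k=5\cdot 2^{k-1}-2$, and the only (inconsequential) stylistic difference from Stein is that you treat $k=2$ as the base case rather than bottoming out at $k=1$ with the monotonicity side-condition.
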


\begin{lemma}\label{VanderCorput2}
Suppose that $\psi$ is real-valued and $C^{k}$ in $(a, b)$ for some $k\geq2$ with the estimate \eqref{202310070}. Let $h$ be $C^{1}$ in $(a, b).$ Then, for all $\lambda \in\RR\setminus\{0\},$
\begin{equation*}
\begin{split}
\Big|\int_{a}^{b}e^{i\lambda\psi^{(k)}(x)}h(x)dx\Big|\leq (2^{k-1}5-2)|c\lambda|^{-k^{-1}}\big\{|h(b)|+\int_{a}^{b}|h'(x)|\big\}.
\end{split}
\end{equation*}
\end{lemma}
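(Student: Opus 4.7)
The plan is to reduce Lemma~\ref{VanderCorput2} to the unweighted statement Lemma~\ref{VanderCorput1} by a single integration by parts, exactly as in the classical treatment (Stein, Ch.~VIII). In particular, I would first interpret the phase in the exponent as $\psi(x)$ (reading the $\psi^{(k)}$ in the display as the standard typo, since only the $k$-th derivative estimate \eqref{202310070} is needed), so that Lemma~\ref{VanderCorput1} applies to the oscillatory integral without weight.

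Concretely, introduce the partial oscillatory integral
\begin{equation*}
F(y):=\int_{a}^{y}e^{\mi\lambda\psi(x)}\,dx,\qquad y\in(a,b],
\end{equation*}
and note that by Lemma~\ref{VanderCorput1} applied on each subinterval $(a,y)\subset(a,b)$, one has the uniform bound $|F(y)|\leq(2^{k-1}5-2)|c\lambda|^{-1/k}$, with the constant being independent of $y$ because the lower bound $|\psi^{(k)}|\geq c$ is assumed on the whole of $(a,b)$. Since $F(a)=0$, integration by parts gives
\begin{equation*}
\int_{a}^{b}e^{\mi\lambda\psi(x)}h(x)\,dx
=F(b)h(b)-\int_{a}^{b}F(x)h'(x)\,dx.
\end{equation*}

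Then I would take absolute values and use the uniform bound on $|F|$ to obtain
\begin{equation*}
\Big|\int_{a}^{b}e^{\mi\lambda\psi(x)}h(x)\,dx\Big|
\leq(2^{k-1}5-2)|c\lambda|^{-1/k}\Big(|h(b)|+\int_{a}^{b}|h'(x)|\,dx\Big),
\end{equation*}
which is exactly the claimed inequality.

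The only nontrivial point is really the appeal to Lemma~\ref{VanderCorput1}: one needs the constant $(2^{k-1}5-2)$ to be a uniform bound for the oscillatory integral over every subinterval $(a,y)$, not just over the fixed interval $(a,b)$. This is automatic here because the hypothesis $|\psi^{(k)}|\geq c$ holds on all of $(a,b)$, and in the case $k=2$ one also needs $\psi'$ to be monotonic on the subinterval, which is automatic from the sign of $\psi''$. So no real obstacle arises; the proof is a one-line integration by parts plus an application of Lemma~\ref{VanderCorput1}, and I would simply remark that the details parallel the standard presentation in \cite{Stein93}.
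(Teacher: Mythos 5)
Your proof is correct, and it is the standard one: define the antiderivative $F(y)=\int_a^y e^{\mi\lambda\psi(x)}\,dx$, bound it uniformly by Lemma~\ref{VanderCorput1} applied on each subinterval $(a,y)$ (legitimate since $|\psi^{(k)}|\geq c$ holds on all of $(a,b)$), and then integrate by parts. You also correctly read $\psi^{(k)}$ in the exponent as a typo for $\psi$, which is the only sensible reading given the hypothesis~\eqref{202310070}.

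Note, however, that the paper does not prove this lemma at all; it states Lemmas~\ref{VanderCorput1} and~\ref{VanderCorput2} in the appendix ``for the convenience of readers'' and refers to Chapter~VIII of \cite{Stein93} for proofs. So there is no argument in the paper to compare against; you have simply supplied the standard textbook derivation that the paper cites. One small remark: your aside that for $k=2$ one additionally needs $\psi'$ monotonic is not quite to the point. Monotonicity of $\psi'$ is a hypothesis only for the $k=1$ form of the Van der Corput estimate; for $k\geq2$ no monotonicity assumption is required in Lemma~\ref{VanderCorput1} (it is recovered automatically at the $k=1$ base case of the induction because $|\psi''|\geq c>0$ forces $\psi''$ to have a fixed sign). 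This does not affect your argument, since the lemma assumes $k\geq2$.
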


\subsection{Appendix B. Cocycles, rotation number}

Assume now  $A\in C^0(\T, SL(2,\R))$ is homotopic to the
identity. Then there exist $\psi:\T \times \T \to \R$ and $u:\T
\times
\T \to \R^+$ such that $$ A(x) \cdot \left (\begin{matrix} \cos 2 \pi y \\
\sin 2 \pi y \end{matrix} \right )=u(x,y) \left (\begin{matrix} \cos 2 \pi (y+\psi(x,y))
\\ \sin 2 \pi (y+\psi(x,y)) \end{matrix} \right ). $$ The function $\psi$ is
called a {\it lift} of $A$.  Let $\mu$ be any probability measure on
$\T \times \T$ which is invariant by the continuous map $T:(x,y)
\mapsto (x+\omega,y+\psi(x,y))$, projecting over Lebesgue measure on
the first coordinate (for instance, take $\mu$ as any accumulation
point of $\frac {1} {n} \sum_{k=0}^{n-1} T_*^k \nu$ where $\nu$ is
Lebesgue measure on $\T \times \T$). Then the number $$
\rho(\omega,A)=\int \psi d\mu \mod \Z $$ does not depend on the
choices of $\psi$ and $\mu$, and is called the {\it fibered rotation
number} of $(\omega,A)$, see \cite{Johnsonm82} and \cite{Herman83}.

\begin{comment}
It is immediate from the definition that
\begin{equation}\label{rotationnumberper}
|\rho(\alpha, A)-\rho|\leq\|A-R_{\rho}\|_{C^{0}}.
\end{equation}
\end{comment}
\section*{Acknowledgement}
H. Cheng was supported by National Natural Science Foundation of China (12471178, 12001294) and Tianjin Natural Science Foundation of China (23JCYBJC00730).

\bibliographystyle{alpha}
\bibliography{202402kleingordon}

\end{document}